\numberwithin{equation}{section}
\DeclareSymbolFont{calletters}{OMS}{cmsy}{m}{n}
\DeclareSymbolFontAlphabet{\mathcal}{calletters}
\def\be{\begin{eqnarray}}
\def\ee{\end{eqnarray}}
\newtheorem{Theorem}{Theorem}[section]
\newtheorem{Definition}[Theorem]{Definition}
\newtheorem{Proposition}[Theorem]{Proposition}
\newtheorem{Assumption}[Theorem]{Assumption}
\newtheorem{Lemma}[Theorem]{Lemma}
\newtheorem{Corollary}[Theorem]{Corollary}
\newtheorem{Remark}[Theorem]{Remark}
\makeatletter \@addtoreset{equation}{section}
\def \E{\mathbb{E}}
\def \F{\mathbb{F}}
\def \L{\mathbb{L}}
\def \P{\mathbb{P}}
\def \Q{\mathbb{Q}}
\def \R{\mathbb{R}}
\def \S{\mathbb{S}}
\def \M{\mathbf{M}}
\def \N{\mathbb{N}}
\def\dbE{\mathbb{E}}
\def\M{\mathbb{M}}
\def\dbP{\mathbb{P}}
\def\dbR{\mathbb{R}}
\def\Uc{{\cal A}}
\def\Bc{{\cal B}}
\def\Ec{{\cal E}}
\def\Fc{{\cal F}}
\def\Gc{{\cal G}}
\def\Hc{{\cal H}}
\def\Lc{{\cal L}}
\def\Pc{{\cal P}}
\def\Qc{{\cal Q}}
\def\Uc{{\cal U}}
\def \Om{\Omega}
\def \om{\omega}
\def \eps{\varepsilon}
\def \0{\mathbf{0}}
\def \H{\mathbb{H}}
\def \Xh{\widehat{X}}
\newcommand{\ba}{\begin{array}}
\newcommand{\ea}{\end{array}}
\newcommand{\bea}{\begin{eqnarray}}
\newcommand{\eea}{\end{eqnarray}}
\newcommand{\beaa}{\begin{eqnarray*}}
\newcommand{\eeaa}{\end{eqnarray*}}
\newcommand{\sint}{\stackrel{\mbox{\tiny$\bullet$}}{}}
\def\qed{ \hfill \vrule width.25cm height.25cm depth0cm\smallskip}
\newcommand{\basa}{\begin{assumption}}
\newcommand{\easa}{\end{assumption}}
\newcommand{\bas}{\begin{assum}}
\newcommand{\eas}{\end{assum}}
\def\esup{\mathop{\rm ess\,sup}}
\def\argmax{\mathop{\rm arg\,max}}
\def\1{{\bf 1}}
\def\MFG{\mathrm{MFG}}
\def\:{\!:\!}
\newtheorem{thm}{Theorem}[section]
\newtheorem{assum}[thm]{Assumption}
\def\x{\times}
\def\1{{\bf 1}}
\def\sigmat{\widetilde{\sigma}}
\def\Ph{\widehat{\P}}
\begin{document}

\title{Entropic optimal planning \\ for path-dependent mean field games}

\author{
	Zhenjie Ren \footnote{CEREMADE, Universit\'e Paris-Dauphine, PSL Research University. ren@ceremade.dauphine.fr.} 
	\and 
	Xiaolu Tan \footnote{Department of Mathematics, The Chinese University of Hong Kong. xiaolu.tan@cuhk.edu.hk. The research of Xiaolu Tan is supported by Hong Kong RGC General Research Fund (project 14302921).}
	\and 
	Nizar Touzi \footnote{CMAP, Ecole Polytechnique, nizar.touzi@polytechnique.edu}
	\and
	Junjian Yang \footnote{FAM, Fakult\"at f\"ur Mathematik und Geoinformation, Vienna University of Technology, A-1040 Vienna, Austria. junjian.yang@tuwien.ac.at}
}
\date{\today}

\maketitle

\begin{abstract}
	In the context of mean field games, with possible control of the diffusion coefficient, we consider a path-dependent version of the planning problem introduced by P.L. Lions: given a pair of marginal distributions $(\mu_0,\mu_1)$, find a specification of the game problem starting from the initial distribution $\mu_0$, and inducing the target distribution $\mu_1$ at the mean field game equilibrium. 
Our main result reduces the path-dependent planning problem into an embedding problem, that is, constructing a McKean-Vlasov dynamics with given marginals $(\mu_0,\mu_1)$.
	Some sufficient conditions on $(\mu_0,\mu_1)$ are provided to guarantee the existence of solutions. 
	We also characterize, up to integrability, the minimum entropy solution of the planning problem.
	In particular, as uniqueness does not hold anymore  in our path-dependent setting, 
	one can naturally introduce an optimal planning problem which would be reduced to an optimal transport problem along controlled McKean-Vlasov dynamics.
\end{abstract}

\vspace{3mm}

\noindent
\textbf{MSC 2010 Subject Classification:}  49N70, 91A13, 91B40, 93E20 \newline
\vspace{-0.2cm}\newline
\noindent
\noindent
\textbf{Key words:} Mean field games, planning problem, McKean-Vlasov dynamic, optimal transport.

%

\section{Introduction}

During his courses at Coll\`ege de France \cite{Lions2009}, P.-L. Lions introduced the following planning problem for a class mean field games (MFG hereafter): {\it given two marginal distributions $\mu_0$ and $\mu_1$ on $\R^d$, find a solution $(u, m)$ of the following MFG system:}
	 \begin{align}
	   -\partial_t u - \frac{ \sigma^2}{2}\Delta u - H(x, \nabla u) + F(x, m) &= 0, \quad~\, \mbox{in}~(0,1)\times\dbR^d, 
	   \label{eq:HJB_intro} \\
	   \partial_t m - \frac{\sigma^2}{2}  \Delta m + \nabla\cdot\big(m \nabla_zH(x, \nabla u)\big) &= 0, \quad~\, \mbox{in}~(0,1)\times\dbR^d, \label{eq:FP_intro0} \\
	   m(0, \cdot) = \mu_0, \quad m(1, \cdot) &= \mu_1, \quad \mbox{in}~\dbR^d. 
	   \label{eq:FP_intro}
	 \end{align}
{ 
	Namely, let $c(x,b)$ denote the Legendre transform of the Hamiltonian $H(x,z)$ in $z$, 
	then 
	\begin{itemize}
	\item $u$ in \eqref{eq:HJB_intro} corresponds to the value function of the stochastic optimal control problem
	$$
		\sup_{\beta} ~\E \left[ u\big(1, X^{\beta}_1\big) - \int_0^1 \Big( c\big(X^{\beta}_t, \beta_t\big)\Big) + F\big(X^{\beta}_t, m_t\big)  dt \right],
		~~\mbox{subject to}~
		X^{\beta}_t = X_0 + \int_0^t \beta_s ds + W_t,
	$$
	where $W$ is a Brownian motion, and one optimizes over all progressively measurable processes $\beta$;
	\item the Fokker-Plank equation \eqref{eq:FP_intro0} characterizes the marginal distribution of the state process $X^*$ under optimal control $\beta^*_t=\nabla_zH\big(X^*_t,\nabla u(t,X^*_t)\big)$;
	\item and \eqref{eq:FP_intro} collects the initial distribution of the Fokker-Plank equation, as standard, and in addition a final condition which conditions the choice of the final reward function $u(1,.)$
	\end{itemize}
} 
In other wrds, unlike the standard MFG formulation, the HJB equation \eqref{eq:HJB_intro} is not complemented with a terminal condition for $u\big|_{t=1}$, and instead the Fokker-Planck equation \eqref{eq:FP_intro} is equipped with a terminal condition on $m\big|_{t=1}$ in addition to the initial condition $m\big|_{t=0}$. In other words, the planning problem consists in finding an appropriate reward function which stands as the terminal condition for the HJB equation \eqref{eq:HJB_intro}:
	$$
	g:=u\big|_{t=1}.
	$$ 
At the level of the control problem, this can be interpreted as an incentive for the population so that the classical MFG problem has a solution satisfying the marginal constraint $m\big|_{t=0} = \mu_0$ and $m\big|_{t=1} = \mu_1$. For this reason, $g$ is usually referred to as the incentive function. 
	
In the quadratic Hamiltonian setting, Lions \cite{Lions2009} proved an existence and uniqueness result for a large class of initial and target measures. Various extensions have been achieved since then essentially allowing for Hamiltonians with quadratic growth in the gradient, and using weak solutions for the MFG equation, see Achdou, Camilli, and Capuzzo-Dolcetta \cite{ACCD2012}, Porretta \cite{Porretta2014}, Graber, M\'esz\'aros, Silva, and Tonon \cite{GMST2019}, Orrieri, Porretta, and Savar\'e \cite{OPS2019}, Benamou, Carlier, Di Marino, and Nenna \cite{BCDMN2019}, among others.

The main objective of this paper is to extend the formulation of the planning problem to the path-dependent setting. More precisely, the HJB equation is replaced by a possibly path-dependent stochastic control problem, and the Fokker-Planck equation is replaced by the path-dependent stochastic differential equation characterizing the dynamics of the underlying state under the optimal action induced by the control problem. As another extension that we consider in the present paper, we allow for the control of diffusion coefficient which means that, unlike \eqref{eq:HJB_intro}, the HJB equation in the corresponding Markovian setting is allowed to be fully nonlinear.

	By allowing for path dependency, we are considering a much larger class of incentives $\xi$ which may now be chosen as the set of all functionals of the path of the underlying state.
	Therefore, existence of solution should be easier, but we lose the uniqueness feature of the initial planning problem under the monotone condition in Porretta \cite[Theorem 1.3]{Porretta2014}.
	{On the other hand, the multiplicity of solutions in our path-dependent extension raises naturally the planner's optimization problem over all possible $\xi$ according to some performance or loss criterion.
	This point of view is in fact very popular in the literature on contract theory which sets the rules of the so-called delegation problem between a principal and an agent subject to moral hazard. The nature of the incentive salary of the principal to the agent in compensation for the management of some given output is modeled by means of a leader-follower stochastic game: the leader choses the best incentive compensation given the follower's optimal response. The seminal paper by Holstr\"om \& Milgrom \cite{holmstrom1987aggregation} introduces the continuous time modeling of this problem as a Stackelberg stochastic differential game, and obtains the best incentive compensation as a linear function of the output value at the terminal time. The primal inspiration of our results are from Sannikov \cite{Sannikov2008} and Cvitani\'c, Possama\"{i}, and Touzi \cite{CPT2018}, where the optimal incentive contract falls naturally in the more general class of path dependent functions of the output process. We also refer to Elie, Mastrolia \& Possama\"{\i} \cite{EMP2019} for the extension to the multiple agents in Nash equilibrium context. 
	}

Our main results are first stated in the context where the diffusion is not controlled, a similar situation to the semilinear HJB equation \eqref{eq:HJB_intro} in the Markovian case. Under appropriate integrability conditions on the starting and target measures, we provide a complete characterization of the set of all solutions to the path-dependent planning problem in terms of a controlled auxiliary process. Our result then reduces the planning problem into an embedding problem, that is, to find a good controlled McKean-Vlasov dynamic satisfying the marginal constraints. 

Technically, our approach is adapted from the contract theory literature, such as Cvitani\'c, Possama\"{i}, and Touzi \cite{CPT2018}, Elie, Mastrolia, and Possama\"{i} \cite{EMP2019}.
Nevertheless, it consists in a nontrivial adaptation as the drift coefficient of the controlled process is allowed to be unbounded in our setting.
This corresponds to the Markovian case with quadratically growing Hamiltonians in terms of the gradient component in the literature of mean-field planning problem. 

When only the drift of the state process is controlled, we exhibit an explicit planning solution which, up to some integrability requirement, coincides with the (unique) minimum entropy solution of the planning problem. 
When both the drift and the diffusion of the state process are controlled,
{ the distributions of different controlled processes may not be equivalent, which brings some technical difficulties.
By using similar quasi-sure analysis techniques as in the 2nd order BSDE theory by Soner, Touzi and Zhang \cite{STZ2012},
we are still able to}
provide a similar description of the set of all solutions of the planning problem in terms of a controlled auxiliary process. 
This would reduce the optimal planning problem to an optimal transport problem along controlled McKean-Vlasov dynamics.

A remarkable feature of the extension to the controlled diffusion setting is that it encompasses other classes of optimal transport problems, as for instance the martingale optimal transport of Beiglb\"ock, Henry-Labord\`ere, and Penkner \cite{BHLP2013} and Galichon, Henry-Labord\`ere, and Touzi \cite{GHLT2014}, and its connection to the Skorokhod Embedding problem (see Ob\l{}\'oj \cite{Obloj2004} for a review), the martingale Benamou-Brenier problem in Huesmann and Trevisan \cite{HT2019}, Backhoff-Veraguas, Beiglb\"ock, Huesmann, and K\"allblad \cite{BBHK2020}, and the semimartingale optimal transport problem in Mikami and Thieullen \cite{MT2008}, Tan and Touzi \cite{TT2013}, etc.

The rest of the paper is organized as follows. Section \ref{sec:quadratic} provides our minimum entropy solution of the path-dependent planning problem in the purely quadratic setting. This is exactly the path-dependent analogue of \eqref{eq:HJB_intro}. The extension to a larger class of drift control problems is reported in Section \ref{sec:driftcontrol}. Finally, Section \ref{sec:diffusioncontrol} contains our results for the general case when both the drift and the diffusion coefficients are controlled.

	\vspace{0.5em}

	\noindent {\bf Notations}. 
	Denote by $\Om = C([0,1], \R^d)$ the canonical space of all $\R^d$-valued paths on $[0,1]$, equipped with canonical filtration $\F = (\Fc_t)_{t \in [0,1]}$ and canonical process $X$.
	
	Let $\Pc(\R^d)$ be the space of all (Borel) probability measures on $\R^d$, and denote by $\M$  the collection of all flows of probability measure $(m_t)_{t \in [0,1]}$ with $m_t \in \Pc(\R^d)$ for all $t\in[0,1]$. 
	
	Throughout this paper, we fix some initial distribution $\mu_0\in \Pc(\R^d)$, and we denote by $\P_0$ the Wiener measure on $\Om$ with initial distribution $\mu_0$, i.e., $\P_0 \circ X_0^{-1} = \mu_0$ and the process $(X_t -X_0)_{t \in [0,1]}$ is a Brownian motion independent of $X_0$ under $\P_0$.
	
	Finally, for a probability measure $\P$ on $(\Om, \Gc)$, we denote by $\L^p(\P,\Gc)$ the collection of all $\Gc$-measurable random variable with finite $p$-th moment, by $\H^2(\P)$ the collection of all progressively measurable processes $Z$ such that $\E^\P\big[\int_0^1 |Z_t|^2dt\big]<\infty$, and by $\H^2_{\rm loc}(\P)$ the collection of all progressively measurable processes $Z$ such that $\int_0^1 |Z_t|^2dt<\infty$, $\P$-a.s.
	
\section{MFG planning problem: the linear quadratic setting}
\label{sec:quadratic}

	In this section, we introduce a path-dependent version of the Lions' MFG planning problem in the context of the simplest linear quadratic setting,
	and then provide a constructive solution to the planning problem.

\subsection{The path-dependent linear-quadratic MFG problem}	
	
	Recall that $\P_0$ is the Wiener measure on the canonical space $\Om$ with initial distribution $\mu_0$.
	Let $\Pc(\mu_0)$ denote the collection of all (Borel) probability measures $\P$ on $\Om$ equivalent to $\P_0$ with starting measure $\P\circ X_0^{-1}=\mu_0$. 
    For an arbitrary $\P\in\Pc(\mu_0)$, we may find a unique  process $\beta^\P\in\H^2_{\rm  loc}(\P_0)$ such that the density of $\P$ with respect to $\P_0$ has a representation as the Dol\'eans-Dade exponential
	\begin{equation} \label{eq:def_DD}
		\frac{d \P}{d\P_0} 
		=
		\Ec\big(\beta^{\P} \sint X\big)_1
		:=
		\exp\left(\int_0^1\beta^{\P}_s\cdot dX_s - \frac{1}{2}\int_0^1\big|\beta^{\P}_s\big|^2ds\right).
	\end{equation}
	{Indeed, it follows from \cite[Proposition VIII.1.6]{RY1999} that the density process $D^{\dbP}_t:=\frac{d\P}{d\P_0}\big|_{\Fc_t}$, which is a strictly positive continuous martingale, can be represented as 
	  $$ D^{\dbP}_t = \Ec(L^{\dbP})_t := \exp\left(L^{\dbP}_t-\frac{1}{2}\langle L^{\dbP},L^{\dbP}\rangle_t\right), $$
	  with a unique continuous local martingale $L^{\dbP}$ satisfying $L^{\dbP}_0=0$, $\dbP_0$-a.s. 
	By the predictable representation of the Brownian motion, see, e.g., \cite[Theorem V.3.4]{RY1999}, the local martingale $L$ can be represented as a stochastic integral 
	  $$ L^{\dbP}_t = \int_0^t\beta^{\dbP}_s\cdot dX_s. $$
	Therefore, the desired assertion follows.}
 
	It follows from the Girsanov theorem that the canonical process $X$ satisfies the dynamics
	\begin{equation} \label{eq:LQcontrolSDE}
		X_t = X_0 + \int_0^t \beta^{\P}_s ds + W_t^{\P}, \quad  \P\mbox{-a.s.},
	\end{equation}
	for some $\P$-Brownian motion $W^{\P}$. We further define the subspace
	$$
		\Pc_2(\mu_0) 
		  :=
		\left\{
		\P \in \Pc(\mu_0) : \ln\left(\frac{d\P}{d\P_0}\right)\in\L^1(\P_0)
		                              ~\mbox{ and }~
			                      \frac{d\P}{d\P_0}\in\L^2(\P_0)
		\right\}.
	$$
Let  $f: [0,1] \x \Om \x \Pc(\R^d) \longrightarrow \R$ be such that $(t, \om,\mu) \mapsto f_t(\om, \mu)$ is $\F$-progressively measurable for every fixed $ \mu \in \Pc(\R^d)$ and
	$$
		\E^{\P} \left[ \int_0^1 \big| f_t(m_t) \big| dt \right] < \infty,
		\quad \mbox{for all}~\P \in \Pc_2(\mu_0) \mbox{ and }m \in \M.
	$$
Let $\Xi$ define the set of all admissible (path-dependent and measurable) reward function $\xi: \Om \rightarrow \R$ such that $\E^{\P}[\xi^+] < \infty$ for all $\P \in \Pc_2(\mu_0)$.

 \vspace{2mm}

	For $\xi \in \Xi$ and $m\in\M$ with $m_0=\mu_0$, we consider the optimal control problem:
	\be \label{eq:LQcontrolProblem}
		V_0(\xi,m)
		:=
		\sup_{\P \in\Pc_2(\mu_0)} J(\xi,m,\P),
	\ee
	where
	 $$ J(\xi,m,\P)
	 :=
	 \E^{\P} \left[\xi - \int_0^1 \left(\frac12\big|\beta_s^\dbP\big|^2+f_s(m_s)\right)ds \right]. $$

\begin{Remark} \label{rem:controlset}
	{\rm
	An equivalent formulation of the last control problem is to introduce the set of admissible controls 
	\beaa
		\Uc
		 :=
		\Big\{ \beta\in\H^2(\P_0): \E^{\P_0}\big[\Ec(\beta\sint X)\big]=1 
                      ~\mbox{ and }~
                      \Ec\big(\beta \sint X\big)\in\L^2(\P_0)
		\Big\}.
	\eeaa
	Then, each $\beta\in\Uc$ induces a unique equivalent probability measure $\P^\beta$ defined by the density $\frac{d\P^\beta}{d\P_0}=\Ec(\beta\sint X)$, and we have therefore
	\beaa
		V_0(\xi,m)
		:=
		\sup_{\beta\in\Uc} J\big(\xi,m,\P^\beta\big).
	\eeaa
	See Lemma \ref{lem:betaH2} below for the exact correspondence between $\Pc_2(\mu_0)$ and $\Uc$.
}
\end{Remark}

\begin{Definition}[Mean field game] \label{def:MFG}
A probability measure $\Ph \in \Pc_2(\mu_0)$ is a solution of the MFG with reward function $\xi \in \Xi$ if  
 \beaa
   V_0(\xi,  m)=J\big(\xi,  m,\Ph\big)\in\R,
     &\mbox{and}&
   m_t :=
    \Ph \circ X^{-1}_t, 
     ~~\mbox{for all}~ t\in[0,1].
 \eeaa
We denote by $\mathrm{MFG}(\xi,\mu_0)$ the collection of all such solutions of the MFG problem.
\end{Definition}

Our main focus in this paper is on the following mean field game planning problem.

\begin{Definition}[MFG planning] \label{def:MFP}
An admissible reward function $\xi \in \Xi$ is a solution to the MFG planning problem with starting and target distributions $\mu_0, \mu_1 \in \Pc(\R^d)$ if 
 \beaa
 	\Ph \circ X_1^{-1} =\mu_1,
	 &\mbox{for some}&
 	\Ph \in \mathrm{MFG}(\xi,\mu_0).
 \eeaa
We denote by $\mathrm{MFP}(\mu_0,\mu_1)$ the collection of all such solutions of the MFG planning problem.
\end{Definition}	
	
\subsection{Characterization of the solutions of mean field planning problem}

In this section, we provide a characterization of all MFG planning solutions by using a decomposition induced by the dynamic
programming principle. This characterization follows the idea of the reprersentation of the agent problem in the so called Principal-Agent problem as introduced by Sannikov \cite{Sannikov2008}, and further extended in Cvitani\'c, Possama\"{i}, and Touzi \cite{CPT2018}, and Elie, Mastrolia, and Possama\"{i} \cite{EMP2019}. Denote
	\beaa
		\Pc_2(\mu_0, \mu_1) 
		:=
		\big\{\P \in \Pc_2(\mu_0):  \P \circ X_1^{-1} = \mu_1
		\big\},
		&\mu_0,\mu_1\in\Pc(\R^d).
	\eeaa	
	
\begin{Lemma}  \label{lem:betaH2}
	For each $\P\in\Pc_2(\mu_0)$, we have $\beta^\dbP\in\H^2(\dbP_0)$.  
\end{Lemma}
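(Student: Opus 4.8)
The plan is to work directly with the Dol\'eans--Dade representation and reduce the claim to a relative-entropy bound. Write $Z_t := \Ec(\beta^\P\sint X)_t$ for the density process, $Z := Z_1 = d\P/d\P_0$, and $M_t := \int_0^t \beta^\P_s\cdot dX_s$. Under $\P_0$ the canonical process satisfies $dX = dW^{\P_0}$, so $M$ is a continuous local martingale with $\langle M\rangle_t = \int_0^t|\beta^\P_s|^2ds$, and taking logarithms in the exponential formula gives the pointwise identity
\begin{equation*}
  \ln Z_t = M_t - \tfrac12\langle M\rangle_t, \qquad\text{hence}\qquad \int_0^1|\beta^\P_s|^2ds = \langle M\rangle_1 = 2\big(M_1 - \ln Z\big).
\end{equation*}
Since $\P$ is a probability measure equivalent to $\P_0$ we have $\E^{\P_0}[Z]=1$, so the nonnegative local martingale $Z$ is in fact a uniformly integrable true martingale with $Z_t = \E^{\P_0}[Z\,|\,\Fc_t]$. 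The target $\E^{\P_0}\big[\int_0^1|\beta^\P_s|^2ds\big]<\infty$ thus amounts to $\E^{\P_0}[M_1]=0$ with a finite value; the obstacle is precisely that $M$ is only a priori a local martingale, so this cannot be taken for granted.

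To handle this I would localise along $\tau_n := \inf\{t\in[0,1] : \langle M\rangle_t \ge n\}$, with the convention $\inf\emptyset = 1$. As $\langle M\rangle$ is continuous and increasing, $\langle M\rangle_{\tau_n}\le n$, so $M^{\tau_n}$ is a continuous local martingale (null at $0$) with bounded quadratic variation, hence a genuine square-integrable martingale, and $\E^{\P_0}[M_{\tau_n}]=0$. Because $\langle M\rangle_{\tau_n}$ is bounded and $M_{\tau_n}\in\L^2(\P_0)$, the stopped identity $M_{\tau_n} = \ln Z_{\tau_n} + \tfrac12\langle M\rangle_{\tau_n}$ shows $\ln Z_{\tau_n}\in\L^1(\P_0)$, and taking expectations yields
\begin{equation*}
  \tfrac12\,\E^{\P_0}\big[\langle M\rangle_{\tau_n}\big] = -\,\E^{\P_0}\big[\ln Z_{\tau_n}\big].
\end{equation*}

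The crux is then to bound $\E^{\P_0}[\ln Z_{\tau_n}]$ from below uniformly in $n$. By optional sampling of the uniformly integrable martingale $Z$ we have $Z_{\tau_n}=\E^{\P_0}[Z\,|\,\Fc_{\tau_n}]$, and since $\ln$ is concave and $\ln Z\in\L^1(\P_0)$ by hypothesis, conditional Jensen gives $\ln Z_{\tau_n}\ge \E^{\P_0}[\ln Z\,|\,\Fc_{\tau_n}]$ $\P_0$-a.s.; taking expectations, $\E^{\P_0}[\ln Z_{\tau_n}]\ge \E^{\P_0}[\ln Z]$. Combined with the previous display this produces the uniform bound $\tfrac12\E^{\P_0}[\langle M\rangle_{\tau_n}]\le -\E^{\P_0}[\ln Z]$, whose right-hand side is finite and nonnegative (by Jensen, $\E^{\P_0}[\ln Z]\le \ln\E^{\P_0}[Z]=0$).

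Finally $\tau_n\uparrow 1$ $\P_0$-a.s., since $\langle M\rangle_1=\int_0^1|\beta^\P_s|^2ds<\infty$ a.s. from $\beta^\P\in\H^2_{\rm loc}(\P_0)$; so by monotone convergence $\E^{\P_0}\big[\int_0^1|\beta^\P_s|^2ds\big]=\lim_n\E^{\P_0}[\langle M\rangle_{\tau_n}]\le -2\,\E^{\P_0}[\ln Z]<\infty$, which is exactly $\beta^\P\in\H^2(\P_0)$. I expect the only delicate points to be the justification that $M^{\tau_n}$ is a true (not merely local) martingale and the correct direction of Jensen's inequality, as it is the latter that makes the bound uniform in $n$. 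I note that this argument uses only the integrability $\ln(d\P/d\P_0)\in\L^1(\P_0)$ (the $\L^2$-requirement in the definition of $\Pc_2(\mu_0)$ is not needed here), and that it identifies the energy as twice the reverse relative entropy, $\E^{\P_0}\big[\int_0^1|\beta^\P_s|^2ds\big]=2\,\big(-\E^{\P_0}[\ln Z]\big)$.
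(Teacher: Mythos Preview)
Your proof is correct and follows essentially the same route as the paper: both arguments write $\ln Z_t = \int_0^t\beta^\P_s\cdot dX_s - \tfrac12\int_0^t|\beta^\P_s|^2\,ds$, localise so that the stochastic integral has zero mean, apply conditional Jensen to bound $\ln Z_{\tau_n}$ from below via $\E^{\P_0}[\ln Z\,|\,\Fc_{\tau_n}]$, and conclude by monotone convergence. The only cosmetic differences are that the paper localises with a stopping time that also truncates the conditional expectation $\E^{\P_0}[|\ln\zeta|\,|\,\Fc_t]$, and it bounds by $2\,\E^{\P_0}[|\ln\zeta|]$ rather than your sharper $-2\,\E^{\P_0}[\ln Z]$; your additional remark identifying the energy exactly with the reverse entropy is a nice by-product (once $\langle M\rangle_1\in\L^1$ is established, $M$ is a true $\L^2$-martingale and the equality follows directly).
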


\begin{proof}
	Set $\zeta:=\frac{d\P}{d\P_0}$, and 
	$$
	Y_t:=\ln \E^{\P_0}[\zeta |\Fc_t]
	 = \int_0^t \beta^\P_s \cdot dX_s - \frac{1}{2} \int_0^t \big| \beta^\P_s\big|^2ds, 
	 \quad t\in[0,1],~~\P_0\mbox{-a.s.}	
	$$
	By the Jensen inequality, $Y_t \geq
	  \E^{\P_0}[\ln\zeta|\Fc_t] 
	  \geq 
	  -\E^{\P_0}\big[(\ln\zeta)^-\big|\Fc_t\big]
	  \geq
	  -\E^{\P_0}\big[|\ln\zeta| \big| \Fc_t\big].$
	Then, introducing the stopping times $ \tau_n:=1\wedge\inf\big\{t>0: \E^{\P_0}\big[| \ln\zeta| \big|\Fc_t\big] > n,\,\int_0^t\big| \beta^\P_s\big|^2ds>n\big\},$ $n\in\N,$ 
	 it follows the tower property that 
	\begin{align*}
	\E^{\P_0}\left[\int_0^{\tau_n} \big|\beta^\P_s\big|^2ds\right] 
	 \leq
	2\E^{\P_0}
	\left[\E^{\P_0}\big[ \big| \ln\zeta \big| \big| \Fc_{\tau_n}\big]\right]
	 = 
	2\E^{\P_0}\big[ \big| \ln\zeta \big| \big] 
	 < 
	\infty.
	\end{align*}
	Since $\tau_n\nearrow 1$ as $n\to\infty$, the assertion follows by the monotone convergence theorem. 
	\qed
\end{proof}

	\begin{Theorem} \label{thm:linear_quad}
	For all pair of starting and target measures $(\mu_0, \mu_1) \in \Pc(\R^d)\times \Pc(\R^d)$, we have:
		$$
			\mathrm{MFP}(\mu_0, \mu_1)
			=
			\L^1(\Fc_0, \P_0)
			+
			\left\{\int_0^1 \!\beta^{\P}_t\cdot dX_t - \int_0^1\!\Big(\frac{1}{2}\big|\beta^{\P}_t\big|^2 - f_t\big(\P \circ X_t^{-1}\big)\Big)dt : \P \in \Pc_2(\mu_0, \mu_1)
			\right\}.
		$$
	\end{Theorem}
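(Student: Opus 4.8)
The plan is to prove the two inclusions separately, exploiting that the control problem \eqref{eq:LQcontrolProblem} is an \emph{entropic} one. Indeed, since under $\P$ one has $dX_t = \beta^\P_t\,dt + dW^\P_t$, a direct computation shows that the relative entropy satisfies $\E^\P\big[\ln\frac{d\P}{d\P_0}\big] = \frac12 \E^\P\big[\int_0^1 |\beta^\P_s|^2 ds\big]$, so that $J(\xi,m,\P) = \E^\P\big[\xi - \int_0^1 f_s(m_s)ds\big] - \E^\P\big[\ln\frac{d\P}{d\P_0}\big]$. The whole argument then rests on the elementary ``completing the square'' identity, which I would carry out pathwise and then control through the integrability built into $\Pc_2(\mu_0)$.

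For the inclusion $\supseteq$, I fix $\P\in\Pc_2(\mu_0,\mu_1)$ and $\xi_0\in\L^1(\Fc_0,\P_0)$, set $\xi := \xi_0 + \int_0^1 \beta^\P_t\cdot dX_t - \int_0^1\big(\frac12|\beta^\P_t|^2 - f_t(m_t)\big)dt$ with $m_t:=\P\circ X_t^{-1}$, and show that $\P$ is the (unique) maximiser of $J(\xi,m,\cdot)$. For an arbitrary competitor $\Q\in\Pc_2(\mu_0)$ with control $\beta^\Q$, I substitute this expression for $\xi$ into $J(\xi,m,\Q)$; the $f_t(m_t)$ terms cancel, and writing $\int_0^1\beta^\P_t\cdot dX_t = \int_0^1\beta^\P_t\cdot\beta^\Q_t\,dt + \int_0^1\beta^\P_t\cdot dW^\Q_t$ under $\Q$ yields, once the stochastic integral is shown to be a true $\Q$-martingale, $J(\xi,m,\Q) = \E^{\P_0}[\xi_0] - \frac12\E^\Q\big[\int_0^1|\beta^\P_t-\beta^\Q_t|^2 dt\big]$. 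Hence $J(\xi,m,\Q)\le \E^{\P_0}[\xi_0]$ with equality iff $\Q=\P$, so $V_0(\xi,m)=J(\xi,m,\P)=\E^{\P_0}[\xi_0]\in\R$ and $m_t=\P\circ X_t^{-1}$; thus $\P\in\mathrm{MFG}(\xi,\mu_0)$ and, since $\P\circ X_1^{-1}=\mu_1$, we get $\xi\in\mathrm{MFP}(\mu_0,\mu_1)$.

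For the converse inclusion $\subseteq$, I would take $\xi\in\mathrm{MFP}(\mu_0,\mu_1)$ with an MFG solution $\Ph$ and associated flow $m_t:=\Ph\circ X_t^{-1}$, and reconstruct the decomposition from the dynamic value of the entropic problem. Setting $M_t:=\E^{\P_0}\big[\exp(\xi-\int_0^1 f_s(m_s)ds)\,\big|\,\Fc_t\big]$ and $Y_t:=\int_0^t f_s(m_s)ds + \ln M_t$, the martingale representation theorem gives $M_t=M_0+\int_0^t Z_s\cdot dX_s$ under $\P_0$; Itô's formula applied to $\ln M$ then produces, with $\hat\beta_t:=Z_t/M_t$, the identity $\xi = Y_1 = Y_0 + \int_0^1\hat\beta_t\cdot dX_t - \int_0^1\big(\frac12|\hat\beta_t|^2 - f_t(m_t)\big)dt$, where $Y_0$ is $\Fc_0$-measurable. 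It then remains to identify $\hat\beta$ with $\beta^{\Ph}$: since $\Ph$ maximises the entropic functional, the Gibbs variational principle forces $\frac{d\Ph}{d\P_0}=M_1/M_0=\Ec(\hat\beta\sint X)_1$, whence $\hat\beta=\beta^{\Ph}$, and $\Ph\in\Pc_2(\mu_0,\mu_1)$ because $\Ph\circ X_1^{-1}=\mu_1$. Checking $Y_0\in\L^1(\Fc_0,\P_0)$ closes this inclusion.

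The main obstacle throughout is the \emph{unboundedness} of the drift $\beta^\P$, which means that $\Ec(\beta\sint X)$ and the stochastic integrals above are a priori only local martingales: the completing-the-square identity in the first inclusion and the value identity $Y_1=\xi$ in the second both require upgrading these to genuine martingales and justifying that expectations of stochastic integrals vanish. This is exactly where the defining integrability of $\Pc_2(\mu_0)$ — namely $\ln\frac{d\P}{d\P_0}\in\L^1(\P_0)$ and $\frac{d\P}{d\P_0}\in\L^2(\P_0)$, already exploited in Lemma~\ref{lem:betaH2} to get $\beta^\P\in\H^2(\P_0)$ — enters: I would localise along stopping times $\tau_n\nearrow 1$, establish the identities on $[0,\tau_n]$ where all terms are true martingales, and pass to the limit using these integrability bounds together with uniform integrability (and, for the competitor $\Q$, a Cauchy--Schwarz/Young estimate controlling $\E^\Q\big[\int_0^1|\beta^\P_t|^2 dt\big]$ via $\frac{d\Q}{d\P_0}\in\L^2(\P_0)$).
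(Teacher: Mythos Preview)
Your argument for the inclusion $\supseteq$ is essentially the paper's: complete the square and justify that $\int_0^\cdot \beta^{\Ph}_s\cdot dW^\Q_s$ is a true $\Q$-martingale via BDG together with the Cauchy--Schwarz bound
\[
\E^\Q\Big[\Big(\int_0^1|\beta^{\Ph}_s|^2ds\Big)^{1/2}\Big]\le \Big\|\tfrac{d\Q}{d\P_0}\Big\|_{\L^2(\P_0)}\,\|\beta^{\Ph}\|_{\H^2(\P_0)},
\]
both factors being finite by the definition of $\Pc_2(\mu_0)$ and Lemma~\ref{lem:betaH2}. No localisation is needed here.

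For the inclusion $\subseteq$ you take a genuinely different route, and there is a real gap. Your construction $M_t=\E^{\P_0}\big[\exp\big(\xi-\int_0^1 f_s(m_s)ds\big)\,\big|\,\Fc_t\big]$ presupposes $\exp\big(\xi-\int_0^1 f_s(m_s)ds\big)\in\L^1(\P_0)$, but nothing in the hypotheses guarantees this: membership $\xi\in\Xi$ only gives $\E^{\P_0}[\xi^+]<\infty$, and finiteness of the \emph{restricted} supremum $V_0(\xi,m)=\sup_{\P\in\Pc_2(\mu_0)}J(\xi,m,\P)$ does not force finiteness of the unrestricted Donsker--Varadhan value $\ln\E^{\P_0}\big[e^{\xi-\int f}\big]$. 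Even granting that integrability, your identification $\hat\beta=\beta^{\Ph}$ via the Gibbs principle is incomplete: the Gibbs measure $\Q^*$ with density $M_1/M_0$ is the unique maximiser over \emph{all} $\Q\sim\P_0$, whereas $\Ph$ is only known to maximise over the strict subset $\Pc_2(\mu_0)$; to conclude $\Q^*=\Ph$ you would still need $M_1/M_0\in\L^2(\P_0)$ and $\ln(M_1/M_0)\in\L^1(\P_0)$, neither of which follows from the data. And without $\Q^*=\Ph$ there is no reason the terminal marginal of $\Q^*$ should be $\mu_1$.

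The paper avoids all of this by working with the dynamic value process $V_t:=\esup_{\P\in\Pc_2(\mu_0)}\E^\P\big[\xi-\int_t^1 c^\P_s\,ds\,\big|\,\Fc_t\big]$ directly: the dynamic programming principle makes $V+\int_0^\cdot c^\P$ a $\P$-supermartingale for every $\P\in\Pc_2(\mu_0)$ and a $\Ph$-martingale at the optimum, and the representation $\xi=V_0+\int_0^1\beta^{\Ph}_s\cdot dX_s-\int_0^1\big(\tfrac12|\beta^{\Ph}_s|^2-f_s(m_s)\big)ds$ is then read off from Doob--Meyer plus martingale representation, the identification $Z=\beta^{\Ph}$ coming from comparing the nondecreasing parts across all $\P$. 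This route never exponentiates $\xi$ and does not require the unconstrained Gibbs maximiser to lie in $\Pc_2(\mu_0)$.
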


	\begin{proof}
	``$\supseteq$'': We first prove that $\mathrm{MFP}(\mu_0, \mu_1)$ contains the right hand side set. For arbitrary $Y_0 \in \L^1(\Fc_0, \P_0)$ and $\Ph \in \Pc_2(\mu_0, \mu_1)$, denote $m=(m_t)_{t\in[0,1]}$ with $m_t:=\Ph\circ X_t^{-1}$, and
	 $$ \xi
	     :=
	     Y_0+\int_0^1 \beta^{\Ph}_t\cdot dX_t-\int_0^1\Big(\frac{1}{2}\big|\beta^{\Ph}_t\big|^2 - f_t(m_t)\Big)dt. $$ 
	 Let us verify that $V_0(\xi,m)=J(\xi,m,\Ph)$. This would show that $\Ph \in\mathrm{MFG}(\xi,\mu_0)$ and therefore $\xi\in\mathrm{MFP}(\mu_0, \mu_1)$. 
	
	We directly compute for all $\P\in\Pc_2(\mu_0)$ that
	 \begin{align*}
	 	J(\xi,m,\P) 
	 	 &= \E^{\P}\left[\xi - \int_0^1\Big(\frac12\big|\beta^{\P}_s\big|^2 + f_s\big(m_s\big)\Big)ds\right] \\
	 	 &= \E^{\P_0} \big[Y_0 \big] 
	 	    + \E^{\P}\left[\int_0^1 \beta^{\Ph}_s \cdot  \big(dW^{\P}_s + \beta^{\P}_s ds\big) - \int_0^1 \Big( \frac{1}{2}\big|\beta^{\Ph}_s\big|^2 +\frac12\big|\beta^{\P}_s\big|^2\Big) ds \right].
	 \end{align*}
    We next observe that the stochastic integral above is a true martingale under $\P$, i.e., 
      $$ \E^{\P}\left[\int_0^1 \beta^{\Ph}_s \cdot dW_s^{\P} \right] = 0, $$
     which is due to the following application of the Burkholder-Davis-Gundy inequality together with the Cauchy-Schwarz inequality, and $\beta^{\widehat\P}\in\H(\dbP_0)$ by Lemma \ref{lem:betaH2}:
      \begin{align*}
      	\E^{\P} \left[\sup_{0\leq t\leq 1}\left|\int_0^t \beta^{\Ph}_s\cdot dW_s^{\P} \right|\right]
      	 &\le C_1 \E^{\P}\left[\left(\int_0^1\big| \beta^{\Ph}_s \big|^2 ds \right)^{\frac12} \right] \\
      	 &\le C_1\E^{\P_0}\bigg[\Big(\frac{d\P}{d\P_0}\Big)^2\bigg]^{\frac12} 
      	 \E^{\P_0}\left[\int_0^1\big|\beta^{\Ph}_s\big|^2 ds \right]^{\frac12} 
      	 <\infty.
      \end{align*}
   Then,
    \begin{align*}
    	J\big(\xi,m,\P\big)
    	=
    	\E^{\P_0} \big[Y_0 \big] 
    	-\frac12
    	\E^{\P} \left[\int_0^1\big|\beta^{\P}_s - \beta^{\Ph}_s\big|^2 ds\right],
    \end{align*}
    so that $J\big(\xi,m,\P\big) \le \E^{\P_0} \big[Y_0 \big]$ for all $\P \in \Pc_2(\mu_0)$, and $J(\xi,m,\Ph) = \E^{\P_0} \big[Y_0 \big]$, as required.
	
	\vspace{1em}
	
	\noindent ``$\subseteq$'': Let $\xi \in \mathrm{MFP}(\mu_0, \mu_1)$, with a corresponding MFG solution $\Ph \in \mathrm{MFG}(\xi,\mu_0)$, 
	so that $\Ph \in \Pc_2(\mu_0)$ is solution of the optimal control problem $V_0(\xi, m)$ with $m_t := \Ph \circ X_t^{-1}$ for all $t \in [0,1]$.
	Then, it is clear that $\Ph \in \Pc_2(\mu_0, \mu_1)$.
	We aim to show that one can represent $\xi$ as 
	\begin{equation} \label{eq:rep_xi}
		\xi
		 =
		V_0 + \int_0^1 \beta^{\Ph}_t\cdot dX_t - \int_0^1\Big(\frac{1}{2}\big|\beta^{\Ph}_t\big|^2 - f_t(m_t)\Big)dt,
	\end{equation}
	for some random variable $V_0 \in \L^1(\Fc_0, \P_0)$. To see this, we introduce the process
	 \begin{align*}
	 	V_t 
	 	:= 
	 	\esup_{\P \in \Pc_2(\mu_0)}
	 	\E^{\P} \left[ \xi - \int_t^1 c_s^{\P}ds ~\bigg|~ \Fc_t \right],
	 	\quad \mbox{with }~c_t^\P:=\frac12\big|\beta^{\P}_t\big|^2+f_t(m_t), 
	 	\quad t\in[0,1].
	 \end{align*}
	Then, it is clear that $\E^{\P_0} \big[V_0 \big]  = V_0(\xi, m) < \infty$, so that $V_0\in \L^1(\Fc_0, \P_0)$.
	Moreover, it follows by the dynamic programming principle (see e.g.~Djete, Possama\"i and Tan \cite[Definition 2.1, Remark 2.3 and Theorem 3.1]{DPT2020}) that 
	{
	   \begin{align*}
	   	 V_t = \esup_{\P \in \Pc_2(\mu_0)}\dbE^{\dbP}\left[V_u -\int_t^u c_s^{\P}ds ~\bigg|~ \Fc_t \right], \qquad  u\in[t,1].
	   \end{align*}
	  Moreover, it follows by
	     $$  V_t - \int_0^t c_s^{\P}ds = \esup_{\P \in \Pc_2(\mu_0)}\dbE^{\dbP}\left[V_u -\int_0^u c_s^{\P}ds ~\bigg|~ \Fc_t \right], \qquad  u\in[t,1], $$
	     the following martingale optimal principle:}
	\begin{itemize}
	\item For any $\P \in \Pc_2(\mu_0)$, the process $\big\{V_t - \int_0^t c_s^{\P} ds\big\}_{t\in[0,1]}$ is $\P$-supermartingale. 
    By the Doob-Meyer decomposition together with the predictable representation property of the Brownian motion, we have  
	 \begin{align*}
	 	V_t - \int_0^t c_s^{\P_0} ds 
	 	= V_0 + \int_0^t Z_s\cdot dX_s- A^{\P_0}_t, \quad \P_0\mbox{-a.s.}, 
	 \end{align*}
	 for some $Z\in\H^2_{\rm loc}(\P_0)$ and non-decreasing process $A^{\P_0}$ starting from zero. 
	By the change of measure from $\dbP_0$ to $\dbP$, we have 
	 \begin{align*}
	 	V_t = \int_0^t c_s^{\P} ds + V_0 + \int_0^t Z_s\cdot dW^\P_s - A^\P_t, \quad \P\mbox{-a.s.},
	 \end{align*}
	 with 
	  $$ A^\P_t=A^{\P_0}_t+\int_0^t \big(c_s^{\P}-c_s^{\P_0}-Z_s\cdot\beta^\P_s\big)ds. $$
    Moreover, by uniqueness of the Doob-Meyer decomposition under each $\P$, the processes $A^\P$ are also non-decreasing.
	\item The process $\big\{V_t - \int_0^t c_s^{\Ph} ds\big\}_{t\in[0,1]}$ is a $\Ph$-martingale, i.e.,
	   $$ 0 = A^{\Ph}_t = A^{\P_0}_t+ \int_0^t\big(c_s^{\Ph}-c_s^{\P_0}-Z_s\cdot\beta^{\Ph}_s\big)ds. $$
	  This shows that $A^{\P_0}$ is absolutely continuous with respect to the Lebesgue measure, and provides the expression for the non-decreasing process $A^\P$ which inherits the absolute continuity property with respect to the Lebesgue measure with density:
	\beaa
	\frac{dA^\P_t}{dt}
	  =
	c_t^{\P}+Z_t\cdot\beta^{\Ph}_t
	-c_t^{\Ph}-Z_t\cdot\beta^{\P}_t
	\ge 0,
	\quad \mbox{for all}~~
	\P\in\Pc_2(\mu_0).
	\eeaa
	In particular, $\beta^{\Ph}$ is the maximizer of 
	{the optimization problem $\max_{\beta^\P} \big( Z\cdot\beta^\P-c^\P \big) = \max_{\beta^\P} \big(  Z\cdot\beta^\P-\frac12\big|\beta^\P\big|^2-f \big)$.
	By the first-order condition, we obtain that $\beta^{\Ph}=Z$. }
	\end{itemize}
The previous analysis shows that $\xi$ has the representation
	\begin{eqnarray*}
		\xi =V_1 
		=V_0 
		+ \int_0^1 \Big(\frac12\big|\beta^{\Ph}_s\big|^2+f_s(m_s)\Big) ds 
		+ \int_0^1 \beta^{\Ph}_s\cdot dW^{\Ph}_s,
	\end{eqnarray*}
	which induces \eqref{eq:rep_xi} by substituting $dW^{\Ph}_s=dX_s-\beta^{\Ph}_sds$.
	\qed
	\end{proof}

\begin{Remark}
{
	Theorem \ref{thm:linear_quad} provides a systematic description of the class of all (possibly path-dependent) solutions of the MFG planning problem.
	Nevertheless, it can not be used to give a precise description of the Markovian solution, i.e., $\xi(X_{\cdot}) = u(T, X_T)$ for some function $u$.
	Indeed, to find some $\beta^{\P}$ and initial condition $u(0,X_0)$ such that
	$$
		u(0, X_0) + \int_0^1 \!\beta^{\P}_t\cdot dX_t - \int_0^1\! \frac{1}{2}\big|\beta^{\P}_t\big|^2 dt  = u(T, X_T), ~~\P \mbox{-a.s. (or equivalently}~\P_0 \mbox{-a.s.)},
	$$
	one can formally use It\^o's formula to identify that
	$$
		\beta^{\P}_t = \nabla u (t, X_t), 
		~~\mbox{and}~~
		\frac12 \big| \beta^{\P}_t \big|^2 = \partial_t u(t, X_t) + \frac12 \Delta u(t, X_t),
		~~ d \P \otimes dt \mbox{-a.e.}
	$$
	This reduces exactly to Lions' original PDE formulation of the MFG planning problem in \eqref{eq:HJB_intro}. 
	On the other hand, an advantage of the description in Theorem \ref{thm:linear_quad} of all solutions allows the planner to choose an optimal one, see more discussions in Section \ref{subsec:entropicMFG}.
}
\end{Remark}

\subsection{A constructive solution to the mean field planning problem}	
\label{subsec:existence_linear}
	
	Theorem \ref{thm:linear_quad} provides a characterization of all solutions of the MFG planning problem by means of the probability measures in $\Pc_2(\mu_0, \mu_1)$. We now use this characterization in order to derive an explicit construction of a particular solution.
	
	\vspace{0.5em}
	
	{ Let $\Pc(\R^d \x \R^d)$ denote the set of probability measures on the product space $\R^d\times \R^d$ and let $\Pi(\mu_0, \mu_1)$ denote the set of all probability measures $\pi \in \Pc(\R^d \x \R^d)$} with marginal distributions $\mu_0$ and $\mu_1$, i.e., $\pi(dx, \R^d) = \mu_0(dx)$ and $\pi(\R^d, dy) = \mu_1(dy)$. We say that $\pi$ is a coupling measure between the starting and target probability measures $\mu_0$ and $\mu_1$.
	We also introduce a reference measure 
	$$
		\rho 
		:=
		\P_0 \circ (X_0, X_1)^{-1} 
		\in
		\Pc(\R^d \x \R^d).
	$$ 
Let $\pi\in \Pi(\mu_0, \mu_1)$ be some coupling measure equivalent to the reference measure $\rho$, and consider the corresponding density function $\frac{d\pi}{d\rho}$ on $\R^d\times\R^d$.
We define the following positive random variable on the canonical space $\Omega$ 
	$$
		\zeta
		:=
		\frac{d\pi}{d\rho}(X_0, X_1), 
	$$
 and observe that	
	$$ 
		\E^{\P_0}[\zeta]
		=
		\E^{\P_0}\left[ \frac{d\pi}{d\rho}(X_0, X_1) \right] 
		=
		\int_{\R^d\times\R^d}\frac{d\pi}{d\rho}(x_0,x_1)d\rho(x_0,x_1)
		=
		1.
	$$
By the martingale representation theorem (see, e.g., \cite[Theorem III.4.33]{JS2003}), there exists a $\F$-progressively measurable process $\widehat \beta$ such that 
	$$ 
		M_t
		:= 
		\E^{\P_0} [\zeta|\Fc_t] 
		= 
		M_0 \Ec\big(\widehat{ \beta} \sint X\big)_t
		=
		M_0 \exp\left(\int_0^t\widehat{\beta}_s\cdot dX_s - \frac{1}{2}\int_0^t|\widehat{\beta}_s|^2ds\right). 
	$$
	In particular, as $\pi(dx, \R^d) = \rho(dx, \R^d) = \mu_0(dx)$, we have $M_0 = 1$, $\P_0$-a.s.

Before stating the main result of this section, we recall the notion of entropy of a probability $\Q_1$ with respect to a reference probability $\Q_0$:
 \begin{align*}
  \mathbf{H}\big(\Q_1 | \Q_0\big) := 
  \begin{cases} \displaystyle
    \E^{\Q_1}\left[ \ln\left(\frac{d\Q_1}{d\Q_0} \right)\right]
     =
     \int_\Omega \ln\left(\frac{d\Q_1}{d\Q_0} \right) d\Q_1, & \mbox{whenever }~ \Q_1\ll \Q_0, \\
     \infty, & \mbox{otherwise},
  \end{cases}
 \end{align*}
{ where $\Q_1\ll \Q_0$ means that $\Q_1$ is absolutely continuous with respect to $\Q_0$.}
	
	\begin{Proposition}  \label{prop:MFP-LQ}
		Let $\pi\in \Pi(\mu_0, \mu_1)$ be equivalent to $\rho$, such that the random variable $\zeta:=\frac{d\pi}{d\rho}(X_0,X_1)$ satisfies
		$$ 
			\E^{\P_0}\big[|\ln\zeta|+\zeta^2\big]
			<
			\infty. 
		$$
        Then, the probability measure $\Ph$ defined by $\frac{d\Ph}{d\P_0}=\zeta$ is an element in $\Pc_2(\mu_0,\mu_1)$. 
		Moreover, $\Ph$ is the unique minimizer of $\mathbf{H}(\cdot| \P_0)$ on $\Pc_\pi := \big\{ \P \in \Pc_{2}(\mu_0, \mu_1) :\P \circ (X_0, X_1)^{-1} = \pi \big\}$.
	\end{Proposition}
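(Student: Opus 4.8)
The plan is to separate the claim into two parts: first that $\Ph \in \Pc_2(\mu_0,\mu_1)$, and then that $\Ph$ is the unique entropy minimizer over $\Pc_\pi$, the latter resting on a Pythagorean-type identity for relative entropy.

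First I would check membership. Since $\pi$ is equivalent to $\rho$, the density $\frac{d\pi}{d\rho}$ is strictly positive $\rho$-a.e., so $\zeta>0$ $\P_0$-a.s.; together with $\E^{\P_0}[\zeta]=1$ (already computed above) this makes $\Ph$ a probability measure equivalent to $\P_0$. The two defining integrability conditions of $\Pc_2(\mu_0)$ are precisely the standing hypotheses $\E^{\P_0}[|\ln\zeta|]<\infty$ and $\E^{\P_0}[\zeta^2]<\infty$. For the marginal constraints a direct change of variables gives, for bounded measurable $h$,
\[
	\E^{\Ph}\big[h(X_0,X_1)\big]
	=\E^{\P_0}\Big[\tfrac{d\pi}{d\rho}(X_0,X_1)\,h(X_0,X_1)\Big]
	=\int h\,\tfrac{d\pi}{d\rho}\,d\rho
	=\int h\,d\pi,
\]
so $\Ph\circ(X_0,X_1)^{-1}=\pi$; in particular $\Ph\circ X_0^{-1}=\mu_0$, $\Ph\circ X_1^{-1}=\mu_1$, and $\Ph\in\Pc_\pi\subseteq\Pc_2(\mu_0,\mu_1)$.

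For the minimization, the key step is the chain rule for Radon--Nikodym densities: since $\Ph\sim\P_0$, any $\P\in\Pc_\pi$ satisfies $\frac{d\P}{d\P_0}=\frac{d\P}{d\Ph}\,\zeta$, so taking logarithms and integrating against $\P$ yields
\[
	\mathbf{H}(\P\,|\,\P_0)=\mathbf{H}(\P\,|\,\Ph)+\E^{\P}[\ln\zeta].
\]
The decisive observation is that $\ln\zeta=\ln\frac{d\pi}{d\rho}(X_0,X_1)$ depends on $(X_0,X_1)$ only, and every $\P\in\Pc_\pi$ has $(X_0,X_1)\sim\pi$, whence
\[
	\E^{\P}[\ln\zeta]=\int \ln\tfrac{d\pi}{d\rho}\,d\pi=\mathbf{H}(\pi\,|\,\rho)
\]
is a constant independent of $\P\in\Pc_\pi$. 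Therefore $\mathbf{H}(\P\,|\,\P_0)=\mathbf{H}(\pi\,|\,\rho)+\mathbf{H}(\P\,|\,\Ph)$ for all $\P\in\Pc_\pi$, and since $\mathbf{H}(\P\,|\,\Ph)\ge 0$ with equality iff $\P=\Ph$ (Gibbs' inequality), $\Ph$ is the unique minimizer, attaining the value $\mathbf{H}(\pi\,|\,\rho)$.

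The main obstacle is purely integrability bookkeeping, namely ruling out any $\infty-\infty$ ambiguity in the identity above. This reduces to verifying that $\mathbf{H}(\pi\,|\,\rho)=\E^{\P_0}[\zeta\ln\zeta]=\E^{\P}[\ln\zeta]$ is finite, which I would get from the elementary bound $\zeta|\ln\zeta|\le \frac1e+\zeta^2$ together with $\E^{\P_0}[\zeta^2]<\infty$ (equivalently $\int|\ln\frac{d\pi}{d\rho}|\,d\pi<\infty$ for every $\P\in\Pc_\pi$). With $\E^{\P}[\ln\zeta]$ finite and constant, the identity holds verbatim whenever $\mathbf{H}(\P\,|\,\P_0)<\infty$ and is trivially consistent (both sides $+\infty$) otherwise, so the inequality $\mathbf{H}(\P\,|\,\P_0)\ge\mathbf{H}(\Ph\,|\,\P_0)$ and its equality case are valid on all of $\Pc_\pi$.
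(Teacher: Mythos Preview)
Your proof is correct and, for part (ii), takes a slightly different route from the paper's. The paper proceeds by disintegration: it computes via Bayes' formula that the conditional law $K^{\Ph}(\cdot\,;x_0,x_1)$ of $\Ph$ given $(X_0,X_1)=(x_0,x_1)$ coincides with that of $\P_0$, and then invokes the chain rule for entropy under disintegration, $\mathbf{H}(\P\,|\,\P_0)=\int\mathbf{H}(K^{\P}\,|\,K^{\P_0})\,d\pi+\mathbf{H}(\pi\,|\,\rho)$. Your argument bypasses the explicit Bayes computation and the disintegration machinery by working directly with the density chain rule $\tfrac{d\P}{d\P_0}=\tfrac{d\P}{d\Ph}\,\zeta$ and the observation that $\ln\zeta$ is $(X_0,X_1)$-measurable, yielding the Pythagorean identity $\mathbf{H}(\P\,|\,\P_0)=\mathbf{H}(\P\,|\,\Ph)+\mathbf{H}(\pi\,|\,\rho)$. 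The two decompositions are of course equivalent (since $K^{\Ph}=K^{\P_0}$ gives $\mathbf{H}(\P\,|\,\Ph)=\int\mathbf{H}(K^{\P}\,|\,K^{\P_0})\,d\pi$), but your version is more elementary and also more explicit on two points the paper leaves implicit: the integrability justification ruling out $\infty-\infty$, and the uniqueness via the equality case of Gibbs' inequality.
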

	
	\begin{proof}
	$\mathrm{(i)}.$ First, by its definition and the transformation formula, it is clear that $\Ph \circ X_0^{-1} = \mu_0$ and $\Ph \circ X_1^{-1} = \mu_1$.
	{
	Indeed, let $B\in\Bc(\dbR^d)$, i.e., Borel-measurable subset in $\dbR^d$, be arbitrary. Then, 
	   \begin{align*}
	   	 \Ph \circ X_0^{-1}(B)
	   	   &= \dbE^{\Ph}\left[\mathbf{1}_{X_0^{-1}(B)}\right] = \dbE^{\dbP_0}\left[\frac{d\pi}{d\rho}(X_0, X_1) \mathbf{1}_{X_0^{-1}(B)}\right] \\
	   	   &= \dbE^{\dbP_0}\left[\frac{d\pi}{d\rho}(X_0, X_1) \mathbf{1}_{X_0^{-1}(B)}\mathbf{1}_{X_1^{-1}(\dbR^d)}\right] \\
	   	   &= \int_{\R^d \x \R^d}\frac{d\pi}{d\rho}(x_0,x_1)\mathbf{1}_{B\times\dbR^d}(x_0,x_1)\rho(x_0,x_1) \\
	   	   &= \pi(B\times\dbR^d) = \mu_0(B).
	   \end{align*}
	   Similarly, one can prove that $\Ph \circ X_1^{-1} = \mu_1$.
    }
	Moreover, by the integrability assumption on $\zeta$, we have 
	  $$ 
	  \E^{\P_0}\bigg[\bigg|\ln\bigg(\frac{d\Ph}{d\P_0}\bigg)\bigg|
	                        +\bigg(\frac{d\Ph}{d\P_0}\bigg)^2
	                 \bigg]=\E^{\P_0}\big[|\ln\zeta|+\zeta^2\big]
			<
			\infty,
	  $$
	and therefore $\Ph \in \Pc_2(\mu_0, \mu_1)$.

		\vspace{0.5em}
		
		\noindent $\mathrm{(ii)}.$
		Let us denote by $K^{\P}(\cdot; x_0, x_1)$ the kernel function of $\P$ conditional on $(X_0, X_1) = (x_0, x_1)$, for any $\P \in \Pc$.
		We observe from the definition of $\Ph$ and Bayes formula that for any $B\in\Bc(\Omega)$  
		 \begin{align*}
		  K^{\Ph}(B;x_0,x_1) 
		   &= \E^{\Ph}\left[\mathbf{1}_{\{X\in B\}}\big| X_0=x_0, X_1=x_1\right] \\
		   &= \frac{\E^{\P_0}\left[\frac{d\pi}{d\rho}(X_0,X_1)\mathbf{1}_{\{X\in B\}}\big| X_0=x_0, X_1=x_1\right]}{\E^{\P_0}\left[\frac{d\pi}{d\rho}(X_0,X_1)\big| X_0=x_0, X_1=x_1\right]} \\
		   &= \frac{\frac{d\pi}{d\rho}(x_0,x_1) \E^{\P_0}\left[\mathbf{1}_{\{X\in B\}}\big| X_0=x_0, X_1=x_1\right]}{\frac{d\pi}{d\rho}(x_0,x_1)} \\
		   &= \E^{\P_0}\left[\mathbf{1}_{\{X\in B\}}\big| X_0=x_0, X_1=x_1\right] 
		    = K^{\P_0}(B;x_0,x_1),
		 \end{align*}
		 as $\frac{d\pi}{d\rho}$ is strictly positive due to the equivalence. 
		Therefore, 
		$$
			K^{\Ph} (\cdot; x_0, x_1) = K^{\P_0}(\cdot; x_0, x_1),
			~\mbox{ for }~\pi\mbox{-a.e.}~(x_0, x_1)\in \R^d \x \R^d.
		$$
		{
		Denote by $C([0,1])|_{x_0,x_1}$ the set of all continuous functions $\omega$ on $[0,1]$ with $\omega(0)=x_0$ and $\omega(1)=x_1$. 
		Denote $\omega^{(-1)}:=\omega|_{(0,1)}$ for $\omega\in C([0,1])$. 
		Further, for any $\P \in \Pc_\pi$, one has 
		  \begin{align*}
		  	\dbP (d \om) &= K^{\P}\big(d \om^{(-1)}; x_0, x_1\big) \pi(d x_0, d x_1), 
		  \end{align*}
		  and
		  \begin{align*}
		  	\dbP_0(d\omega)&=K^{\dbP_0}\big(d\omega^{(-1)};x_0,x_1\big)\rho(dx_0,dx_1).
		  \end{align*}
        By disintegration theorem and non-negativity of the entropy, we have 
         \begin{align*}
         	&\hspace{-2mm}\mathbf{H} \left(\P | \P_0\right) \\
         	&= \int_\Omega \ln\left(\frac{d\dbP}{d\dbP_0}\right) d\dbP \\
         	&= \int_{\R^d \x \R^d}\int_{C([0,1])|_{x_0,x_1}} \ln\left(\frac{K^{\P}\big(d \om^{(-1)}; x_0, x_1\big) \pi(d x_0, d x_1)}{K^{\dbP_0}\big(d\omega^{(-1)};x_0,x_1\big)\rho(dx_0,dx_1)}\right) K^{\P}\big(d \om^{(-1)}; x_0, x_1\big) \pi(d x_0, d x_1)    \\
         	&=  \int_{\R^d \x \R^d}\int_{C([0,1])|_{x_0,x_1}} \ln\left(\frac{K^{\P}\big(d \om^{(-1)}; x_0, x_1\big)}{K^{\dbP_0}\big(d\omega^{(-1)};x_0,x_1\big)}\right) K^{\P}\big(d\om^{(-1)}; x_0, x_1\big) \pi(d x_0, d x_1)  \\
         	&\quad + \int_{\R^d \x \R^d}\int_{C([0,1])|_{x_0,x_1}} \ln\left(\frac{\pi(d x_0, d x_1)}{\rho(dx_0,dx_1)}\right) K^{\P}\big(d \om^{(-1)}; x_0, x_1\big) \pi(d x_0, d x_1)  \\
         	&= \int_{\R^d \x \R^d} \mathbf{H} \Big( K^{\P} \big(\cdot; x_0, x_1 \big) \Big|  K^{\P_0} \big(\cdot; x_0, x_1 \big)  \Big)\pi(dx_0, dx_1) 
         	+
         	\mathbf{H}(\pi | \rho).
         \end{align*} 
        }
		It follows that $\mathbf{H}\big(\Ph \big|\P_0\big) = \mathbf{H}(\pi | \rho)  \le \mathbf{H}(\P| \P_0)$ for all $\P \in \Pc_\pi$.
		
				\vspace{0.5em}
		
		\noindent $\mathrm{(iii)}.$ Finally, the uniqueness follows directly  from the strict convexity of $\P \longmapsto \mathbf{H}( \P| \P_0)$. 
		\qed
	\end{proof}

\subsection{Entropic MFG planning, and further optimal MFG planning solutions}
\label{subsec:entropicMFG}

	Proposition \ref{prop:MFP-LQ} reduces the problem of minimum entropy MFG planning to the standard static Schr\"odinger bridge problem, i.e., minimize the entropy $\mathbf{H}(\pi | \rho)$ among the set $\Pi(\mu_0,\mu_1)$ of all joint measures with marginals $\mu_0$ and $\mu_1$ (see e.g.~the lecture note of Nutz \cite{Nutz2022}).
	However, due to the integrability requirements in Proposition \ref{prop:MFP-LQ}, we need to restrict this set of coupling measures to the following subset 
	\begin{align*}
		\Pi_2(\mu_0, \mu_1)
		:=
		\bigg\{ \pi \in \Pi(\mu_0, \mu_1) : 
			\E^{\P_0}\bigg[ \bigg| \ln\bigg(\frac{d\pi}{d\rho}(X_0,X_1)\bigg) \bigg| \bigg] 
			+
			\E^{\P_0}\bigg[\bigg(\frac{d\pi}{d\rho}(X_0,X_1)\bigg)^2\bigg]
			<
			\infty
		\bigg\}.
	\end{align*}
This set $\Pi_2(\mu_0, \mu_1)$ is convex, but fails to be closed so that the Schr\"odinger bridge problem may not have a solution in $\Pi_2(\mu_0, \mu_1)$. If the solution of the Schr\"odinger bridge problem happens to satisfy the required integrability conditions, then our construction in Proposition \ref{prop:MFP-LQ} provides the minimum entropy solution of the MFG planning problem. This is stated in the following Corollary which is an immediate consequence of our previous results.

\begin{Corollary}\label{Cor:entropicplanning}
Let $\rho = \P_0\circ(X_0,X_1)^{-1}$ be the reference measure on $\R^d\times\R^d$, and assume that the Schr\"odinger bridge problem $\min_{\pi\in\Pi(\mu_0,\mu_1)}\mathbf{H}(\pi | \rho)$ has a unique solution $\pi^*\in\Pi_2(\mu_0, \mu_1)$. Then, the probability measure $\Ph$ defined by    
  $$ \frac{d\Ph}{d\P_0} = \frac{d\pi^*}{d\rho}(X_0,X_1)  $$ 
is the unique minimizer of $\mathbf{H}(\cdot| \P_0)$ on $\Pc_{2}(\mu_0, \mu_1)$.
\end{Corollary}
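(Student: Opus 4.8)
The plan is to leverage the additive disintegration of relative entropy along the endpoint map $(X_0,X_1)$ that already appeared in the proof of Proposition \ref{prop:MFP-LQ}, combined with the optimality of $\pi^*$ for the static Schr\"odinger bridge problem. The whole statement then reduces to: conditional relative entropy is nonnegative, and it vanishes exactly when the kernels coincide.

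First I would check that $\Ph$ is a legitimate competitor, i.e.\ $\Ph \in \Pc_2(\mu_0,\mu_1)$. Since $\pi^* \in \Pi_2(\mu_0,\mu_1)$, the requirement $\E^{\P_0}\big[\big|\ln\frac{d\pi^*}{d\rho}(X_0,X_1)\big|\big]<\infty$ forces $\frac{d\pi^*}{d\rho}>0$ $\rho$-a.e., so $\pi^*$ is in fact equivalent to $\rho$; together with the $\L^2$-bound on $\frac{d\pi^*}{d\rho}(X_0,X_1)$ this lets me invoke Proposition \ref{prop:MFP-LQ} with $\pi=\pi^*$. This gives $\Ph\in\Pc_2(\mu_0,\mu_1)$, the identity $K^{\Ph}(\cdot;x_0,x_1)=K^{\P_0}(\cdot;x_0,x_1)$ for $\pi^*$-a.e.\ $(x_0,x_1)$, and consequently $\mathbf{H}(\Ph\,|\,\P_0)=\mathbf{H}(\pi^*\,|\,\rho)$.

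Next I would establish the lower bound. Let $\P\in\Pc_2(\mu_0,\mu_1)$ be arbitrary; I may assume $\mathbf{H}(\P\,|\,\P_0)<\infty$, otherwise there is nothing to prove. Then $\P\ll\P_0$, so its endpoint law $\pi_\P:=\P\circ(X_0,X_1)^{-1}$ is absolutely continuous with respect to $\rho$ and belongs to $\Pi(\mu_0,\mu_1)$. Disintegrating $\P$ and $\P_0$ along $(X_0,X_1)$ exactly as in the proof of Proposition \ref{prop:MFP-LQ} yields the chain rule
\[
  \mathbf{H}(\P\,|\,\P_0)=\int_{\R^d\x\R^d}\mathbf{H}\big(K^{\P}(\cdot;x_0,x_1)\,\big|\,K^{\P_0}(\cdot;x_0,x_1)\big)\,\pi_\P(dx_0,dx_1)+\mathbf{H}(\pi_\P\,|\,\rho).
\]
Since relative entropy is nonnegative the integral term is $\ge 0$, and since $\pi_\P\in\Pi(\mu_0,\mu_1)$ while $\pi^*$ minimizes $\mathbf{H}(\cdot\,|\,\rho)$ over all of $\Pi(\mu_0,\mu_1)$, I obtain
\[
  \mathbf{H}(\P\,|\,\P_0)\ \ge\ \mathbf{H}(\pi_\P\,|\,\rho)\ \ge\ \mathbf{H}(\pi^*\,|\,\rho)\ =\ \mathbf{H}(\Ph\,|\,\P_0),
\]
so $\Ph$ is a minimizer.

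Finally, uniqueness. If $\P\in\Pc_2(\mu_0,\mu_1)$ attains $\mathbf{H}(\P\,|\,\P_0)=\mathbf{H}(\pi^*\,|\,\rho)$, then the above chain of inequalities must be saturated: the conditional entropy integral vanishes, forcing $K^{\P}(\cdot;x_0,x_1)=K^{\P_0}(\cdot;x_0,x_1)$ for $\pi_\P$-a.e.\ $(x_0,x_1)$, and $\mathbf{H}(\pi_\P\,|\,\rho)=\mathbf{H}(\pi^*\,|\,\rho)$, so that $\pi_\P$ is itself a Schr\"odinger bridge minimizer. By the assumed uniqueness of $\pi^*$, $\pi_\P=\pi^*$; combining this with $K^{\P}=K^{\P_0}=K^{\Ph}$ $\pi^*$-a.e.\ and reconstructing $\P(d\om)=K^{\P}(d\om;x_0,x_1)\,\pi_\P(dx_0,dx_1)$ identifies $\P=\Ph$. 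The only point that deserves care is the chain rule for a \emph{general} competitor $\P$ rather than one with a prescribed endpoint law, but this is the standard additivity of relative entropy under disintegration and requires only $\P\ll\P_0$, which holds whenever $\mathbf{H}(\P\,|\,\P_0)<\infty$; everything else is just nonnegativity of conditional entropy and the uniqueness of the bridge.
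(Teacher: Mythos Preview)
Your proposal is correct and follows exactly the route the paper has in mind: the paper states the corollary as ``an immediate consequence of our previous results'' without writing out a proof, and your argument is precisely the natural one---apply the entropy disintegration from the proof of Proposition~\ref{prop:MFP-LQ} to an arbitrary $\P\in\Pc_2(\mu_0,\mu_1)$ (with its own endpoint law $\pi_\P$), then chain the nonnegativity of the conditional term with the optimality and uniqueness of $\pi^*$ in the static Schr\"odinger bridge. Your care in noting that $\pi^*\in\Pi_2(\mu_0,\mu_1)$ forces equivalence to $\rho$, and that the chain rule only needs $\P\ll\P_0$, fills in exactly the small gaps the paper leaves implicit.
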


	\vspace{0.5em}
	
{
	As is standard in the literature, in particular for the Schr\"odinger bridge problem, 
	the relative entropy $H(\P | \P_0)$ can be considered as a distance between the measure $\P$ and the reference measure $\P_0$.
	The above result implies that the corresponding solution induces a population distribution for the output process with smallest departure from the reference distribution $\P_0$, in terms of the entropy.  
}
	
We conclude this section by a formal discussion on the selection among optimal planning solutions. Given our characterization of all solutions to the MFG planning problem in Theorem \ref{thm:linear_quad}, Corollary \ref{Cor:entropicplanning} selects a solution of the MFG planning which has minimum entropy with respect to the Wiener measure. One may consider other optimization criteria which can be seen as the planner problem whose task is to implement the optimal solution of the MFG planning problem in view of some collective objective. Except for the constraint on the target distribution of the population, this point of view is close to the spirit of contract theory in the economics literature, where the planner, called principal, faces a population of agents in Nash equilibrium, see Sannikov \cite{Sannikov2008} and Cvitani\'c, Possama\"{i} and Touzi \cite{CPT2018} for the one-agent setting, and Elie, Mastrolia, and Possama\"{i} \cite{EMP2019} for the corresponding MFG problem.

\section{MFG planning problem under uncontrolled diffusion} \label{sec:driftcontrol}

	In this section, we show that the mean field planning solution of the linear quadratic MFG, as derived in the previous section, can be adapted to a general class of nonlinear MFG problems whose corresponding Hamiltonian has quadratic growth in the gradient.

\subsection{Formulation of the mean field planning problem}

Throughout this section, $U$ is a given closed subset of $\R^d$, and we denote by $\Pc^U_2(\mu_0)$ the subset of all measures $\P\in\Pc_2(\mu_0)$ such that $\beta^\P \in U,$ Leb$\otimes\P$-a.s. 

Let $c: [0,1] \x \Om \x U \x \Pc(\R^d) \longrightarrow \R$ be an $\F$-progressively measurable map with  
$$
		\E^{\P} \left[ \int_0^1 \big| c_s\big(\beta^\P_s, m_s\big) \big| ds \right] < \infty,
		~\mbox{ for all}~m \in\M, ~\P \in \Pc^U_2(\mu_0).
	$$
	
Similarly, we introduce the subset $\Xi^U$ of all measurable reward functions $\xi\in\Xi$ such that $\E^{\P} \big[ \xi^+ \big] < \infty$ for all $\P \in \Pc^U_2(\mu_0)$.

\vspace{2mm}

For all $m\in\M$ and $\xi \in \Xi^U$, we consider the control problem
	\bea  \label{eq:driftcontrolP}
 		V_0(\xi,m)
		:=
		\sup_{\P \in \Pc^U_2(\mu_0)} J \big( \xi, m, \P),
	&\mbox{where}&
		J \big( \xi, m, \P)
		:= 
		\E^{\P}\left[\xi - \int_0^1c_s\big(\beta^\P_s,m_s\big)ds\right]. 
	\eea
The notions of mean field game and mean field planning are defined as in Definitions \ref{def:MFG} and \ref{def:MFP}, up to the substitution of $\Pc_2$ and $\Xi$ by $\Pc^U_2$ and $\Xi^U$.

\subsection{Characterization of the solutions of the mean field planning problem}

We introduce the Hamiltonian $H$ defined on $[0,1] \x \Om \x \R^d \x \Pc(\R^d)$ by
	\bea  \label{eq:def_Hamiltonian}
		H_s(z,m)
		:=
		H_s(\om,z,m)
		:=
		\sup_{b\in U}  \big\{b\cdot z - c_s(\om,b,m) \big\}. 
	\eea
This defines a convex map in $z$. The following Assumption \ref{assum:driftHamiltonian} guarantees that it is finite, so that the supremum in \eqref{eq:def_Hamiltonian} is attained at any point of the partial sub-gradient $\partial_z H_s(z,m)$ of the convex function $H$ in $z$, i.e., 
  $$ \partial_z H_s(z,m) := \left\{y\in\R^d: H_s(z',m)-H_s(z,m)\geq y\cdot(z'-z), \,\forall z'\in\R^d\right\}. $$

Our main result holds on the following condition which restricts the Hamiltonian to have quadratic growth on terms of the gradient.

\begin{Assumption} \label{assum:driftHamiltonian}
The Hamiltonian satisfies the quadratic growth condition: 
\beaa
\mbox{\rm ess}\inf \min_{(s,m)\in[0,1]\x\Pc(\R^d)} \big| \partial_zH_s(z, m) \big| 
\ge C_1|z|-C_2,
&\mbox{for all}&
z\in\R^d,
\eeaa
for some constants $C_1,C_2 > 0$.
\end{Assumption}

{
	Let $\widehat{b}:[0,1] \x \Om \x R^d \x \Pc(\R^d) \longrightarrow \R^d$ be a measurable function such that $\widehat{b}_s(\om, z, m) \in \partial_z H_s(\om, z, m)$ for all $(s,\om,z,m) \in [0,1] \x \Om \x R^d \x \Pc(\R^d)$,
	and  $Z\in\H^2(\P_0)$ be a control process,
	we next consider the controlled McKean-Vlasov SDE 
	\begin{align} \label{eq:MVCSDE}
		X_t = X_0 + \int_0^t \widehat{b}_s\big(Z_s,\P \circ X_s^{-1}\big)ds + W_t^\P, \quad t \in [0,1],\quad 
		\P\mbox{-a.s.,}
	\end{align} 
	where a solution is a probability $\P \in \Pc(\Om)$ on the canonical space $\Om$, such that for some $\P$-Brownian motion $W^\P$ the equality \eqref{eq:MVCSDE} holds.
	Further, let us denote 
}	\begin{align*}
		\mathrm{MKV}(\mu_0, \mu_1) 
		  & := \left\{(Z, \P) \in \H^2(\P_0)\times\Pc^U_2(\mu_0, \mu_1): 
		              \P~\mbox{solution of}~\eqref{eq:MVCSDE} \right\}, \\
		\Xi(\mu_0, \mu_1)
		  & := \L^1(\Fc_0, \P_0)
		  +\left\{Y^Z_1: (Z, \P) \in \mathrm{MKV}(\mu_0, \mu_1)\right\},
	\end{align*}
     with
     $$ Y^Z_t := \int_0^t Z_s \cdot dX_s - \int_0^tH_s\big(Z_s,\P \circ X_s^{-1}\big)ds, \quad t\in[0,1]. $$

\begin{Theorem}
	For all pairs of starting and target measures $(\mu_0, \mu_1) \in \Pc(\R^d)\times \Pc(\R^d)$, we have $\Xi(\mu_0, \mu_1) \subseteq \mathrm{MFP}(\mu_0, \mu_1)$. 
	
	Moreover, the equality $\Xi(\mu_0, \mu_1) = \mathrm{MFP}(\mu_0, \mu_1)$ holds under Assumption \ref{assum:driftHamiltonian}.
\end{Theorem}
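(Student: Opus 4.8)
The plan is to follow the two-sided scheme of Theorem~\ref{thm:linear_quad}, with the explicit quadratic data $\tfrac12|z|^2$ replaced by the general Hamiltonian $H_s(z,m)$ and its maximizers $\partial_z H_s(z,m)$. For the inclusion $\Xi(\mu_0,\mu_1)\subseteq\mathrm{MFP}(\mu_0,\mu_1)$, which should require no extra assumption, I would fix $(Z,\Ph)\in\mathrm{MKV}(\mu_0,\mu_1)$ and $Y_0\in\L^1(\Fc_0,\P_0)$, set $m_t:=\Ph\circ X_t^{-1}$ and $\xi:=Y_0+Y^Z_1$, and check that $\Ph$ is optimal for $V_0(\xi,m)$. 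Writing $dX_s=\beta^\P_s\,ds+dW^\P_s$ under an arbitrary competitor $\P\in\Pc^U_2(\mu_0)$, and using that $\int_0^\cdot Z_s\cdot dW^\P_s$ is a true $\P$-martingale (here $Z\in\H^2(\P_0)$ from the definition of $\mathrm{MKV}$ and $d\P/d\P_0\in\L^2(\P_0)$ enter through Burkholder--Davis--Gundy and Cauchy--Schwarz, exactly as in Theorem~\ref{thm:linear_quad}), one obtains
\[
 J_0(\xi,m,\P)=\E^{\P_0}[Y_0]+\E^{\P}\Big[\int_0^1\big(Z_s\cdot\beta^\P_s-c_s(\beta^\P_s,m_s)-H_s(Z_s,m_s)\big)\,ds\Big].
\]
The Fenchel inequality $b\cdot z-c_s(b,m)\le H_s(z,m)$ built into \eqref{eq:def_Hamiltonian} renders the integrand nonpositive, so $J_0(\xi,m,\P)\le\E^{\P_0}[Y_0]$; and the identity $\beta^\Ph_s=\widehat b_s(Z_s,m_s)\in\partial_z H_s(Z_s,m_s)$ forces equality, since an element of the subgradient is precisely a maximizer in \eqref{eq:def_Hamiltonian}. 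Hence $\Ph\in\mathrm{MFG}(\xi,\mu_0)$ with $\Ph\circ X_1^{-1}=\mu_1$, i.e.\ $\xi\in\mathrm{MFP}(\mu_0,\mu_1)$.

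For the reverse inclusion under Assumption~\ref{assum:driftHamiltonian}, I would take $\xi\in\mathrm{MFP}(\mu_0,\mu_1)$ with optimizer $\Ph\in\mathrm{MFG}(\xi,\mu_0)\subseteq\Pc^U_2(\mu_0,\mu_1)$, put $m_s:=\Ph\circ X_s^{-1}$, and introduce the value process
\[
 V_t:=\esup_{\P\in\Pc^U_2(\mu_0)}\E^\P\Big[\xi-\int_t^1 c_s(\beta^\P_s,m_s)\,ds\ \Big|\ \Fc_t\Big].
\]
As in Theorem~\ref{thm:linear_quad}, the dynamic programming principle makes $\{V_t-\int_0^t c_s(\beta^\P_s,m_s)\,ds\}_t$ a $\P$-supermartingale for every $\P\in\Pc^U_2(\mu_0)$ and a $\Ph$-martingale. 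Since $\P_0$ need not belong to $\Pc^U_2(\mu_0)$ when $0\notin U$, I would run the Doob--Meyer/predictable representation step under $\Ph$ rather than $\P_0$: representing the $\Ph$-martingale against $W^\Ph$ yields $Z\in\H^2_{\rm loc}(\P_0)$ with $V_t=V_0+\int_0^t c_s(\beta^\Ph_s,m_s)\,ds+\int_0^t Z_s\cdot dW^\Ph_s$. A change of measure then exhibits, for each competitor $\P$, the nondecreasing finite-variation part
\[
 A^\P_t=\int_0^t\big(c_s(\beta^\P_s,m_s)-c_s(\beta^\Ph_s,m_s)-Z_s\cdot(\beta^\P_s-\beta^\Ph_s)\big)\,ds\ \ge\ 0.
\]
Feeding in constant controls $\beta^\P\equiv b$ on small intervals, ranging $b$ over a countable dense subset of $U$ and invoking continuity of $c_s(\cdot,m)$, I would deduce that $\beta^\Ph_s$ maximizes $b\mapsto Z_s\cdot b-c_s(b,m_s)$, so $\beta^\Ph_s\in\partial_z H_s(Z_s,m_s)$ and $H_s(Z_s,m_s)=Z_s\cdot\beta^\Ph_s-c_s(\beta^\Ph_s,m_s)$. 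With $A^\Ph\equiv0$ and $dW^\Ph_s=dX_s-\beta^\Ph_s\,ds$, the representation at $t=1$ collapses to $\xi=V_0+\int_0^1 Z_s\cdot dX_s-\int_0^1 H_s(Z_s,m_s)\,ds=V_0+Y^Z_1$.

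The step that genuinely exceeds Theorem~\ref{thm:linear_quad}, and where I expect the main obstacle, is upgrading $Z$ from $\H^2_{\rm loc}(\P_0)$ to $\H^2(\P_0)$, which is demanded by $(Z,\Ph)\in\mathrm{MKV}(\mu_0,\mu_1)$ and legitimizes all the preceding martingale manipulations. This is precisely the role of Assumption~\ref{assum:driftHamiltonian}: from $\beta^\Ph_s\in\partial_z H_s(Z_s,m_s)$ the quadratic-growth bound gives $|\beta^\Ph_s|\ge C_1|Z_s|-C_2$, hence $|Z_s|\le C_1^{-1}(|\beta^\Ph_s|+C_2)$, and Lemma~\ref{lem:betaH2} yields $\beta^\Ph\in\H^2(\P_0)$, so that $Z\in\H^2(\P_0)$. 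Then $(Z,\Ph)\in\mathrm{MKV}(\mu_0,\mu_1)$ with measurable selection $\widehat b=\beta^\Ph$ solving \eqref{eq:MVCSDE}, and $\xi=V_0+Y^Z_1\in\Xi(\mu_0,\mu_1)$. The remaining delicate points are the ones typical of an unbounded drift: the admissibility and $\L^2$/$\H^2$-integrability of the perturbations used in the pointwise maximization, the measurable selection of the maximizer, and the passage from local to true martingales in the stochastic integrals; these I would handle by localization together with the $\L^2$-density and $\H^2$ bounds, taking care that, in contrast to the linear-quadratic case, $\P_0$ itself may fail to be an admissible control measure.
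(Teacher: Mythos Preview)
Your proposal is correct and follows essentially the same route as the paper: the forward inclusion via the Fenchel inequality for $H$ together with the BDG/Cauchy--Schwarz martingale check, and the reverse inclusion via the dynamic value process, DPP, Doob--Meyer plus predictable representation to extract $Z$, identification of $\beta^{\Ph}$ as a maximizer, and finally the upgrade $Z\in\H^2(\P_0)$ from Assumption~\ref{assum:driftHamiltonian} combined with Lemma~\ref{lem:betaH2}. The only cosmetic difference is that you run the representation step under $\Ph$ rather than $\P_0$ to sidestep the issue $0\notin U$; the paper simply refers back to the proof of Theorem~\ref{thm:linear_quad} without dwelling on this point.
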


\begin{proof}
	$\mathrm{(i)}.$ Let $\xi:=Y_0 + Y_1^Z$ be an arbitrary element in $\Xi(\mu_0, \mu_1)$ with corresponding $(Z, \Ph) \in \mathrm{MKV}(\mu_0, \mu_1)$, and denote $m_s:=\Ph\circ X_s^{-1}$.
	To show that $\xi \in \mathrm{MFP}(\mu_0, \mu_1)$, it is enough to show that $\xi \in \Xi^U$, and moreover, $\Ph$ is a solution of the optimization problem $V_0(\xi, m)$ in \eqref{eq:driftcontrolP}, so that $\Ph \in \MFG(\xi)$.
	
	\vspace{2mm}	

	For an arbitrary $\P \in \Pc^U_2(\mu_0)$, we first check that, by the Burkholder-Davis-Gundy inequality and Cauchy-Schwarz inequality,
	  $$ 
		\E^{\P}\left[\sup_{0\leq t\leq 1}\bigg|\int_0^t {Z}_s\cdot dW^{\P}_s\bigg|\right] 
		 \leq
		C_1\E^{\P_0}\bigg[\bigg(\frac{d\P}{d\P_0}\bigg)^2\bigg]^{1/2} 
		      \E^{\P_0}\bigg[\int_0^1\big| {Z}_s\big|^2 ds\bigg]^{1/2} 
		 <
		\infty.
	 $$
	Then, the stochastic integral $\big(Z\sint W^{\P}\big)$ is a true martingale and therefore $\E^{\P}\big[\big(Z\sint W^{\P}\big)_1\big] = 0$. 
	We compute that
 	\begin{align*}
 		J(\xi,m,\P) 
 	 	&= \E^{\P}\bigg[\xi - \int_0^1 c_s\big(\beta^{\P}_s, {m}_s\big) ds\bigg]  \\
 	 	&= \E^{\P_0} \big[Y_0\big] 
 	      + \E^{\P} \bigg[\int_0^1{Z}_s \cdot \big(dW^{\P}_s + \beta^{\P}_sds\big) 
 	      - \int_0^1 H_s\big({Z}_s,{m}_s\big)ds 
 	      - \int_0^1 c_s\big(\beta^{\P}_s,{m}_s\big) ds\bigg] \\
 	 	&= \E^{\P_0} \big[Y_0\big] 
 	      + \E^{\P}\left[\int_0^1\Big({Z}_s\cdot \beta^{\P}_s - c_s\big(\beta^{\P}_s, {m}_s\big)  - H_s({Z}_s,{m}_s) \Big)ds\right].
 	\end{align*}
	By the definition of the Hamiltonian $H$, it follows that 
	$J\big(\xi,m,\P\big) \le \E^{\P_0} \big[Y_0\big]$ for all $\P \in \Pc^U_2(\mu_0)$.
	{ As $\widehat\dbP$ is the solution to \eqref{eq:MVCSDE} and 
	   $$ \beta_s^{\widehat{\dbP}} = \widehat{b}(Z_s,m_s) \in\partial_zH_s(Z_s,m_s), $$
	   it deduces by \cite[Theorem 23.5]{Rockafellar1970} that $\beta_s^{\widehat{\dbP}}$ is the optimizer of the Hamiltonian, hence
	     $$ Z_s\cdot\beta_s^{\widehat\dbP} -c_s\big(\beta_s^{\widehat\dbP},m_s\big) = H_s(Z_s,m_s), \quad \widehat\dbP\mbox{-a.s.} $$
	Therefore, $J(\xi,m,\Ph) = \E^{\P_0} \big[Y_0\big]$. }

	\vspace{0.5em}

	\noindent $\mathrm{(ii)}.$ Under the additional conditions in Assumption \ref{assum:driftHamiltonian},
	we consider $\xi \in \mathrm{MFP}(\mu_0, \mu_1)$, together with $\Ph \in\MFG(\xi, \mu_0)$ such that $\Ph \circ X_1^{-1}  = \mu_1$
	so that $J(\xi,m,\Ph)=V(\xi,m)$, for  $m_s := \Ph \circ X_s^{-1}$, $s \in [0,1]$.
	Let us define
	\beaa
		V_t 
		 :=
		\esup_{\P \in \Pc^U(\mu_0)}
		\E^{\P} \bigg[\xi-\int_t^1c_s\big(\beta^\P_s,m_s\big)ds\bigg|\,\Fc_t\bigg], 
		~~t\in[0,1].
	\eeaa
	Then, $\E^{\P_0} \big[ V_0 \big] = V_0(\xi, m) \in \R$, so that $V_0 \in \L^1(\Fc_0, \P_0)$.
	Moreover, by the dynamic programming principle, we argue as in Step (ii) of the proof of Theorem \ref{thm:linear_quad} to show the existence of some $Z\in\H^2_{\rm loc}(\P_0)$ such that  
	 \begin{align*}
	 	V_t = V_0 + \int_0^t Z_s\cdot dX_s - \int_0^t \big(Z_s\cdot\beta_s^{\Ph}-c_s^{\Ph}\big) ds,
	 \end{align*}
	 and
	 \begin{align*}
	 	Z_t\cdot\beta^{\Ph}_t-c_t^{\Ph} 
	 	 = \max_{\P\in\Pc^U_2(\mu_0)}\left\{Z_t\cdot\beta^{\P}_t-c_t^{\P} \right\} 
	 	 = H_t(Z_t,m_t).
	 \end{align*}
	Moreover, since $\beta^\P\in\H^2(\P_0)$ by the definition of $\Pc^U_2(\mu_0)$, it follows by Assumption \ref{assum:driftHamiltonian} that $Z \in\H^2(\P_0)$.
	This concludes the proof that $(Z, \Ph) \in \mathrm{MKV}(\mu_0, \mu_1)$, and hence $\xi \in \Xi(\mu_0, \mu_1)$.
	\qed
	\end{proof}

\subsection{Existence of solution to the mean field planning problem}

	Under Assumption \ref{assum:driftHamiltonian}, the last theorem reduces the construction of a solution of the MFG planning problem to the construction of a solution of the McKean-Vlasov SDE \eqref{eq:MVCSDE} with given starting and target marginals. To do this, we adapt the same arguments as in Section \ref{subsec:existence_linear} under the following additional condition.

	\begin{Assumption} \label{assum:H_z_range}
		The Hamiltonian satisfies the full range condition $\partial_zH_t(\om, \R^d, m)=\R^d$ for all $(t,\omega, m)\in[0,1]\times\Omega\x\Pc(\R^d)$.
	\end{Assumption}

	{ The condition in Assumption \ref{assum:H_z_range} ensures that for any $b \in \R^d$, there exits $z \in \R^d$ such that $b \in \partial_z H_t(\om, z, m)$ (or equivalently $b$ is an optimizer in the definition of $H$ in \eqref{eq:def_Hamiltonian}).
	As application, let us consider a probability measure $\pi \in \Pi(\mu_0, \mu_1)$ (i.e. a measure on $\R^d \x \R^d$ with marginals $\mu_0$ and $\mu_1$), assume that $\pi$ is equivalent to the reference measure $\rho := \P_0 \circ (X_0, X_1)^{-1}$ so that one can define $\zeta:=\frac{d\pi}{d\rho}(X_0, X_1)$.
	Recall the Dol\'eans-Dade exponential $\Ec(\cdot) $ defined in \eqref{eq:def_DD},
	we can then choose a progressively measurable process $Z$} satisfying
	\begin{align} \label{def:Z}
		\widehat\beta_s \in \partial_z H_s(Z_s, m_s), ~\mbox{\rm Leb}\otimes\P_0\mbox{-a.s., with $\widehat\beta$ defined by } \zeta =\Ec\big(\widehat\beta\sint X\big)_1.
	\end{align}
	Then following the same argument as in Proposition \ref{prop:MFP-LQ}, one can prove that this provides a solution of the MFG planning problem.
 
	\begin{Proposition} \label{prop:MFP_existence}
	 Let Assumptions \ref{assum:driftHamiltonian} and \ref{assum:H_z_range} hold true, and suppose in addition that there exists $\pi\in\Pi(\mu_0,\mu_1)$ equivalent to the reference measure $\rho=\dbP_0\circ(X_0,X_1)^{-1}$ such that the density $\zeta:=\frac{d\pi}{d\rho}(X_0,X_1)$ satisfies $\E^{\P_0}\big[|\ln\zeta|+\zeta^2\big]<\infty$.
	 Define the measure $\Ph$ equivalent to $\P_0$ by $\frac{d\Ph}{d\P_0}=\zeta$ and let $Z$ be as defined in \eqref{def:Z}. 

	 Then, the pair $(Z, \Ph) \in \mathrm{MKV}(\mu_0, \mu_1)$, and consequently $Y^Z_1\in\mathrm{MFP}(\mu_0, \mu_1)$. 
	 Moreover, $\Ph$ is the unique minimizer of $\mathbf{H}(\cdot| \P_0)$ on $\Pc_\pi := \big\{ \P \in \Pc_{2}(\mu_0, \mu_1) : \P \circ (X_0, X_1)^{-1} = \pi \big\}$.
	\end{Proposition}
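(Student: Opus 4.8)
The plan is to mimic the structure of Proposition \ref{prop:MFP-LQ} while carefully handling the nonlinear Hamiltonian. The statement asserts three things: that $(Z,\Ph)\in\mathrm{MKV}(\mu_0,\mu_1)$, that consequently $Y_1^Z\in\mathrm{MFP}(\mu_0,\mu_1)$, and that $\Ph$ minimizes relative entropy on $\Pc_\pi$. The last claim is immediate: the entropy decomposition from part (ii) of the proof of Proposition \ref{prop:MFP-LQ} is purely measure-theoretic and does not use the quadratic structure, so the identical kernel-conditioning argument (via Bayes' formula and the strict positivity of $d\pi/d\rho$) shows $K^{\Ph}(\cdot;x_0,x_1)=K^{\P_0}(\cdot;x_0,x_1)$ for $\pi$-a.e.\ $(x_0,x_1)$, hence $\mathbf{H}(\Ph|\P_0)=\mathbf{H}(\pi|\rho)\le\mathbf{H}(\P|\P_0)$ for all $\P\in\Pc_\pi$. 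I would simply invoke that argument verbatim.

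The real work is showing $(Z,\Ph)\in\mathrm{MKV}(\mu_0,\mu_1)$, which requires two verifications. First, I would check that $\Ph\in\Pc^U_2(\mu_0,\mu_1)$: the marginal constraints $\Ph\circ X_0^{-1}=\mu_0$ and $\Ph\circ X_1^{-1}=\mu_1$ and the integrability $\E^{\P_0}[|\ln\zeta|+\zeta^2]<\infty$ follow exactly as in part (i) of Proposition \ref{prop:MFP-LQ}. The additional point, which was absent in the linear-quadratic case, is that $\beta^{\Ph}\in U$ holds $\mathrm{Leb}\otimes\Ph$-a.s. This is precisely where Assumption \ref{assum:H_z_range} enters: by the full-range condition $\partial_z H_s(\om,\R^d,m)=\R^d$, equation \eqref{def:Z} admits a measurable selection $Z$ with $\widehat\beta_s=\beta^{\Ph}_s\in\partial_z H_s(Z_s,m_s)$, and since any subgradient of $H_s(\cdot,m)=\sup_{b\in U}\{b\cdot z-c_s(b,m)\}$ is attained by a maximizing $b\in U$, one gets $\beta^{\Ph}_s\in U$ automatically. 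I would also confirm $\beta^{\Ph}\in\H^2(\P_0)$ via Lemma \ref{lem:betaH2} so that $\Ph$ indeed lies in $\Pc^U_2$.

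Second, I must verify that $Z\in\H^2(\P_0)$ and that $\Ph$ solves the McKean-Vlasov SDE \eqref{eq:MVCSDE}. The SDE is satisfied essentially by construction: choosing $\widehat b\in\partial_z H$ to be the selection realizing $\beta^{\Ph}_s=\widehat b_s(Z_s,m_s)$, the Girsanov dynamics \eqref{eq:LQcontrolSDE} under $\Ph$ become exactly \eqref{eq:MVCSDE}, with $m_s=\Ph\circ X_s^{-1}$ by definition of $\Ph$. The integrability $Z\in\H^2(\P_0)$ is the one genuinely new estimate: I would use Assumption \ref{assum:driftHamiltonian}, which gives the lower bound $|\partial_z H_s(z,m)|\ge C_1|z|-C_2$. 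Since $\beta^{\Ph}\in\partial_z H_s(Z_s,m_s)$ we get $|\beta^{\Ph}_s|\ge C_1|Z_s|-C_2$, whence $|Z_s|^2\le C(1+|\beta^{\Ph}_s|^2)$ pointwise; integrating and using $\beta^{\Ph}\in\H^2(\P_0)$ yields $\E^{\P_0}\big[\int_0^1|Z_s|^2\,ds\big]<\infty$. The main obstacle I anticipate is the measurable-selection issue in \eqref{def:Z}: one must ensure that the correspondence $(s,\om)\mapsto\{z:\widehat\beta_s(\om)\in\partial_z H_s(\om,z,m_s)\}$ admits a jointly $\F$-progressively measurable selection $Z$, which requires a measurable-selection theorem (e.g.\ a Kuratowski--Ryll-Nardzewski or Filippov-type argument) applied to the closed-graph subdifferential correspondence; once this and the a priori bound are in hand, the remaining verifications are routine and parallel Proposition \ref{prop:MFP-LQ}. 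Finally, $Y_1^Z\in\mathrm{MFP}(\mu_0,\mu_1)$ follows immediately from $(Z,\Ph)\in\mathrm{MKV}(\mu_0,\mu_1)$ together with the inclusion $\Xi(\mu_0,\mu_1)\subseteq\mathrm{MFP}(\mu_0,\mu_1)$ established in the preceding theorem.
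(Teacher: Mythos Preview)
Your proposal is correct and follows essentially the same approach as the paper's proof, which invokes Proposition \ref{prop:MFP-LQ} for the marginals and entropy minimality, Lemma \ref{lem:betaH2} for $\beta^{\Ph}\in\H^2(\P_0)$, and Assumptions \ref{assum:driftHamiltonian}--\ref{assum:H_z_range} to obtain $Z\in\H^2(\P_0)$ and the McKean--Vlasov dynamics. In fact you supply more detail than the paper does, spelling out the pointwise bound $|Z_s|^2\le C(1+|\beta^{\Ph}_s|^2)$ and flagging the measurable-selection step that the paper leaves implicit.
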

	
	\begin{proof}
	  As in Proposition \ref{prop:MFP-LQ}, we have $\Ph \circ X_0^{-1} = \mu_0$, $\Ph \circ X_1^{-1} = \mu_1$. 
	  By Lemma \ref{lem:betaH2}, we obtain $\beta^{\Ph}\in\H^2(\P_0)$. 
	  Due to Assumptions \ref{assum:driftHamiltonian} and \ref{assum:H_z_range}, the process $Z$ defined in \eqref{def:Z} with $m_s=\Ph\circ X_s^{-1}$ satisfies $Z\in\H^2(\P_0)$. 
	  Finally, our construction immediately yields $\Ph \in \Pc^U_2(\mu_0)$ and
	    $$ X_t = X_0 + \int_0^t\widehat{b}_s\big(Z_s,\Ph\circ X_s^{-1}\big)ds + W_t^{\Ph}, \quad  \Ph\mbox{-a.s.}, $$
	    for some measurable selection $\widehat{b}\in\partial_z H$, Leb$\otimes\Ph$-a.s. 
	  Therefore, $\big(Z, \Ph\big) \in \mathrm{MKV}(\mu_0, \mu_1)$ and $Y_1^Z\in \mathrm{MFP}(\mu_0, \mu_1)$.
	  
	  Following the proof of Proposition \ref{prop:MFP-LQ}, we obtain the entropy minimality. 
	\qed
	\end{proof}

\begin{Remark}{\rm
The discussion of Subsection \ref{subsec:entropicMFG} fully applies to the present setting. Therefore, under the appropriate integrability condition on the solution of the static Schr\"odinger bridge problem, Proposition \ref{prop:MFP_existence} provides the minimum entropy solution of the MFG planning problem.
}
\end{Remark}

\section{MFG planning problem under controlled diffusion}
\label{sec:diffusioncontrol}
\subsection{Formulation of the mean field planning problem}

	Let $\S^d$ denote the space of all symmetric matrices, and $\S^d_+$ the subspace of all positive semidefinite symmetric matrices.
	Let $\Pc$ denote the collection of all probability measures $\P$ on the canonical space $\Om$, under which the canonical process $X$ is a diffusion process with the following decomposition
	$$
		X_t = X_0 + \int_0^t \widehat b^{\P}_s ds + \int_0^t \widehat \sigma_s\, dW^{\P}_s, \quad t \in [0,1], \quad \P \mbox{-a.s.},
	$$
for some $\P$-Brownian motion $W^{\P}$.
	We recall from Karandikar \cite[Theorem 3 and below]{Karandikar1995} that the quadratic variation process $\langle X \rangle$ can be defined independently of $\P \in \Pc$,
	so that $\widehat\sigma_t$ can be defined as the unique square root matrix of $\widehat{\sigma}^2_t$ in $\S^d_+$,
	with 
	$$
		\widehat\sigma_t^2
		:= \lim_{\eps \searrow 0} \frac{\langle X \rangle_t - \langle X \rangle_{(t-\eps) \vee 0}}{\eps},
		 \quad t\in[0,1].
	$$	
	Let $U$ be a closed convex subset of $\R^d\times \S^{d}_+$, with the given two marginal distributions $\mu_0$ and $\mu_1$, we will introduce the set $\Pc^U(\mu_0)$ in two different settings:
 	\begin{itemize}
		\item Setting 1: let
		\begin{equation} \label{eq:def_PcU_0}
			\Pc^U (\mu_0) := \left\{\P\in\Pc ~:  \P \circ X_0^{-1} = \mu_0 ~\mbox{and}~ \Big(\widehat b^{\P}_s, \frac12 \widehat\sigma^2_s \Big) \in U, ~\mbox{Leb}\otimes\P\mbox{-a.e.}\right\}.
		\end{equation}
	
		\item Setting 2: let $U$ satisfy
		\begin{equation}\label{assume:kramkov}
			(b, a) \in U 
			~\Longrightarrow~
			(0,a) \in U
			~\mbox{and}~
			b = a^{1/2} \beta
			~\mbox{for some}~\beta \in\R^d,
		\end{equation}
		and define
		$$
			\Qc(\mu_0) := \big\{ \Q \in \Pc : \Q \circ X_0^{-1} = \mu_0,~X ~\mbox{is a}~\Q\mbox{-martingale} \big\},
		$$
		and
		\begin{eqnarray} \label{eq:def_PcU_1}
			\Pc^U(\mu_0)
			&\!\!\!\!:=\!\!\!\!&
			\bigg\{\P\in\Pc : \frac{d\P}{d \Q} = \Ec \bigg( \int_0^\cdot \beta^{\P}_s \cdot  dW^{\Q}_s \bigg)_1, ~\mbox{for some}~\beta^{\P},~\Q\in\Qc(\mu_0),~\mbox{s.t.}~\nonumber \\
			&&~~~~~~~~~~
		              \int_0^1 |\beta^{\P}_s|^2 ds < \infty, ~\Q\mbox{-a.s.}, ~\mbox{and}~\Big(\widehat \sigma_s \beta^{\P}_s, \frac{1}{2}\widehat \sigma^2_s\Big) \in U, ~\mbox{Leb}\otimes\P\mbox{-a.e.}
	  		\bigg\}.~~~~~~
		\end{eqnarray}
	\end{itemize}
With $\Pc^U(\mu_0)$, we introduce
	$$
		\Pc^U (\mu_0, \mu_1) := \big\{
	 	                            \P \in \Pc^U(\mu_0) :
	 	                            \P \circ X_1^{-1} = \mu_1
	 	                       \big\}.
	$$
	We next consider a cost function $c: [0,1] \x \Om \x \R^d \x \S^{d}_+ \x \Pc(\R^d) \longrightarrow  \R \cup \{\infty\}$ such that $(t,\omega)\mapsto c_t(\omega, \cdot)$ is $\F$-progressively measurable, in particular $ c_t(\omega,\cdot) = c_t(\omega_{t\wedge\cdot}, \cdot)$, for all $(t,\omega)\in[0,1]\times \Omega$, and we assume that
	$$
		\E^{\P} \bigg[
			\int_0^1  c^-_s\big(\widehat b^{\P}_s, \widehat\sigma_s^2, m_s\big) ds
		\bigg]
		< \infty,
		~~\mbox{for all}~~\P \in \Pc^U(\mu_0),~~ m\in\mathbb{M},
		~~\mbox{with}~~ c^- (\cdot):= \max(-c (\cdot), 0).
	$$
	We now introduce the following control problem in weak formulation
	\begin{equation}\label{V0:controlvol}
		V_0(\xi, m) := \sup_{\P \in \Pc^U(\mu_0)} J(\xi, m, \P),
		~~\mbox{with}~~
		J(\xi, m, \P) := \E^{\P} \left[ \xi - \int_0^1 c_s\big(\widehat b^{\P}_s, \widehat\sigma_s^2, m_s\big) ds \right],
	\end{equation}
	where the reward function $\xi: \Om \longrightarrow  \R \cup \{ -\infty\}$ is restricted to the set
	$$
		\Xi^U
		 :=
		\left\{ \xi : \Om \rightarrow \R :
			\E^{\P}\big[ \xi^+ \big] < \infty,~\mbox{for all}~\P \in \Pc^U(\mu_0) 
		\right\}.
	$$
\begin{Definition}
		\begin{enumerate}[{\rm (i)}] 
			\item For $\xi \in \Xi^U$ and $\mu_0\in\Pc_2(\R^d)$, we denote by
			       $$
			       \mathrm{MFG}(\xi, \mu_0) 
			        :=
			       \left\{
			       \Ph \in \Pc^U(\mu_0) : J(\xi, m, \Ph) = V_0(\xi, m) \in \R~\mbox{with}~m_s = \Ph \circ X_s^{-1}, ~s \in [0,1]
			       \right\}
			       $$
			       the set of all solutions to the MFG problem with reward function $\xi$.
			\item Given a pair $(\mu_0, \mu_1)$ of starting and target marginals, we denote by
				$$
			          \mathrm{MFP}(\mu_0, \mu_1)
			           :=
			          \left\{
			            \xi \in \Xi^U :
			               \Ph \circ X_1^{-1} = \mu_1 
			                ~\mbox{for some}~\Ph \in \mathrm{MFG}(\xi, \mu_0)
			          \right\}
			       $$
			        the collection of all reward functions $\xi \in \Xi^U$ which induce some MFG solution $\Ph$ with marginals $\Ph\circ X_0^{-1}=\mu_0$, $\Ph\circ X_1^{-1}=\mu_1$.
		\end{enumerate}
	\end{Definition}

\subsection{Characterization of the solutions of the mean field planning problem}

	Let $\Hc^2(\mu_0) := \bigcap_{\P \in \Pc^U(\mu_0)} \H^2(\P)$, where $\H^2(\P)$ denotes the collection of all $\F$-progressively measurable processes $Z: [0,1] \x \Om \longrightarrow \R^d$ such that $\E^{\P} \big[ \int_0^1 |\widehat\sigma_sZ_s|^2 ds \big] < \infty$. The Hamiltonian of the last stochastic control problem is defined by:
	\begin{equation} \label{eqdef:Hamiltonian22}
		H_s(\om, z, \gamma, m)
		:=
		\sup_{(b, a) \in U} \Big\{ b \cdot z + \frac12 a:\gamma - c_s(\om, b, a, m) \Big\}.
	\end{equation}
	Let us denote  the domain of $H$ by
	$$
		D_H(s,\om,m) : = \big\{ (z, \gamma) \in \R^d \x \S^d ~: H_s(\om, z, \gamma, m) < \infty \big\},
	$$
	and by $\partial_{(z,\gamma)} H_s( z, \gamma, m) \subseteq U$ the sub-gradient of the convex function $(z, \gamma) \longmapsto H_s(z, \gamma, m)$ in $U$.
	
	\vspace{0.5em}
	
	Given $\F$-progressively measurable processes $(Z, \Gamma)$ on $\Om$ taking value in $\R^d \x \S^d$, we introduce the McKean-Vlasov SDE
	\begin{eqnarray} \label{eq:MKV_SDE}
		&&X_t = X_0 + \int_0^t \overline b_s \big(Z_s, \Gamma_s, \Ph \circ X_s^{-1} \big) ds 
		                     + \int_0^t \overline\sigma_s \big(Z_s, \Gamma_s, \Ph \circ X_s^{-1}  \big) dW^{\Ph}_s,
		\quad \Ph\mbox{-a.s.}
		\\
		&&\mbox{for some measurable selection}~
		\Big(\overline b_s, \frac12 \overline \sigma_s^2\Big)(z, \gamma, m)
		\in\partial_{(z,\gamma)} H_s( z, \gamma, m) \subseteq U.
		\nonumber
	\end{eqnarray}
	Let $\mathrm{MKV}_0(\mu_0, \mu_1)$ be the collection of all triples $(Z, \Gamma, \Ph)$ such that
	$(Z, \Gamma) \in D_H(\cdot, \widehat m_{\cdot})$ with $\widehat m_s := \Ph \circ X_s^{-1}$,
	and $\Ph \in \Pc^U(\mu_0, \mu_1)$ is a (weak) solution of the last McKean-Vlasov SDE.
	We next define
	$$
		\mathrm{MKV}(\mu_0, \mu_1)
		:=
		\Big\{
			\big(Z, \Gamma, \Ph\big) \in \mathrm{MKV}_0(\mu_0, \mu_1)
			:Z\in\Hc^2(\mu_0)
		\Big\}.
	$$
	Finally, we introduce for all $(Z, \Gamma, \Ph)\in\mathrm{MKV}(\mu_0, \mu_1)$ the $\Fc_1$-measurable random variable
	$$
		Y^{Z, \Gamma, \Ph}_1 
		:=
		\int_0^1 Z_s\cdot dX_s + \int_0^1 \bigg(\frac12 \Gamma_s:\widehat \sigma^2_s - H_s\big( Z_s, \Gamma_s, \Ph\circ X_s^{-1}\big) \bigg) ds.
	$$
\begin{Remark}{\rm 
	We implicitly work here under the ZFC set-theoretic axioms and the continuum hypothesis. 
	Then, for $Z \in \Hc^2(\mu_0)$, the stochastic integral $\int_0^t Z_s \cdot dX_s$ is well-defined under each $\P \in \Pc^U(\mu_0)$ and it can be aggregated as a universal process independent of $\P \in \Pc^U(\mu_0)$, see Nutz \cite[Theorem 2.2, Lemma 2.5]{Nutz2012}.
	Further, for each $\P \in \Pc^U(\mu_0)$, as $\int_0^t Z_s \cdot \widehat{\sigma}_s\beta^{\P}_s ds$ is well defined,
		and 
		$$
			Z_s \cdot \widehat{\sigma}_s\beta^{\P}_s + \frac12 \Gamma_s: \widehat \sigma^2_s - H_s(Z_s, \Gamma_s, \P\circ X_s^{-1}) 
			 \le 
			c_s \big( \widehat{\sigma}_s\beta^{\P}_s, \widehat \sigma^2_s, \P\circ X_s^{-1} \big),
		$$
		it follows that 
		$$
			\int_0^1 \Big(\frac12 \Gamma_s:\widehat \sigma^2_s - H_s( Z_s, \Gamma_s, \Ph\circ X_s^{-1})\Big) ds
		$$
		is pathwisely well-defined under each $\P \in \Pc^U(\mu_0)$.
		Consequently,
		$Y^{Z, \Gamma, \P}_1$ can be aggregated as a universal random variable on $\Om$ taking value in $\R \cup \{-\infty\}$.
}
\end{Remark}

For the main result of this section, we denote $ \pi(U) := \big\{ a : (b,a) \in U ~\mbox{for some}~b \in \R^d \big\},$ and we define $\Lc^1_0(\mu_0):=\bigcap_{\P\in\Pc^U(\mu_0)}\L^1\big(\Fc_{0}^+,\P)$, and  
	 \begin{align*}
	 	\Xi(\mu_0,\mu_1)
		:= 
		\Lc^1_0(\mu_0)
		+\left\{Y_1^{Z,\Gamma,\Ph} : (Z,\Gamma,\Ph) \in \mathrm{MKV}(\mu_0, \mu_1) \right\}.
	 \end{align*}

\begin{Theorem} \label{thm:2MFP_main1}
	   The following holds true:
		\begin{enumerate}[{\rm (i)}] 
			\item In both settings \eqref{eq:def_PcU_0} and \eqref{eq:def_PcU_1} for the definition of $\Pc^U(\mu_0)$, one has 
			     $$ \Xi(\mu_0,\mu_1) \subseteq \mathrm{MFP}(\mu_0, \mu_1). $$
			\item In the setting \eqref{eq:def_PcU_1} for the definition of $\Pc^U(\mu_0)$, let $\xi \in \mathrm{MFP}(\mu_0, \mu_1)$.
			      Assume in addition that $\sup_{\gamma} \big\{ \frac12 a: \gamma - H(\cdot, \gamma) \big\}$ has a maximizer in the domain of $H$ for all $a \in \pi(U)$,
			      and that
				  \begin{equation} \label{eq:kappa_integ}
					\sup_{\P \in \Pc^U(\mu_0)} \E^{\P} \bigg[
					\big| \xi \big|^{2+\kappa} + \int_0^1 \big| c_s \big(\widehat{\sigma}_s\beta^{\P}_s, \widehat\sigma_s^2, m_s\big) \big|^{2+ \kappa} ds
					\bigg] < \infty,
						\quad \mbox{for some}~\kappa > 0.
			      \end{equation}
			      Then, $\xi \in \Xi(\mu_0,\mu_1)$.
				  Moreover, let $\widehat m_s:=\Ph\circ X_s^{-1}$ for some $\Ph\in\mathrm{MFG}(\xi,\mu_0)$, then we may choose the corresponding $Y_0+Y^{Z,\Gamma,\widehat{\P}}_1\in\Xi(\mu_0,\mu_1)$ such that 
				  \begin{equation} \label{eq:equality_optimal_ctrl}
					\argmax_{\P\in\Pc^U(\mu_0)} J(\xi,\widehat m,\P)
						=
					\argmax_{\P\in\Pc^U(\mu_0)} J\Big(Y_0+Y^{Z,\Gamma,\widehat{\P}}_1,\widehat m,\P\Big).
			      \end{equation}
		\end{enumerate}	 
	\end{Theorem}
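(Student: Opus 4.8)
I would follow the two-sided scheme of Theorem~\ref{thm:linear_quad} and of the drift-control theorem of Section~\ref{sec:driftcontrol}, upgrading the first-order (drift) representation into a second-order (drift--diffusion) one; the genuinely new ingredient is the extraction of the process $\Gamma$. \emph{Part (i)}, which holds in both settings, is a verification argument: take $\xi=Y_0+Y_1^{Z,\Gamma,\Ph}$ with $(\Ph,Z,\Gamma)\in\mathrm{MKV}(\mu_0,\mu_1)$ and $Y_0\in\Lc^1_0(\mu_0)$, set $m_s:=\Ph\circ X_s^{-1}$, and insert $dX_s=\widehat b^{\P}_s\,ds+\widehat\sigma_s\,dW^{\P}_s$ into $Y_1^{Z,\Gamma,\Ph}$ for an arbitrary $\P\in\Pc^U(\mu_0)$. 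Since $Z\in\Hc^2(\mu_0)\subseteq\H^2(\P)$, the term $\int_0^\cdot Z_s\cdot\widehat\sigma_s\,dW^{\P}_s$ is a true $\P$-martingale by the Burkholder--Davis--Gundy and Cauchy--Schwarz inequalities, exactly as in the earlier proofs, so it vanishes in $\E^{\P}$. What survives is $\E^{\P}[Y_0]$ plus the expected time-integral of $Z_s\cdot\widehat b^{\P}_s+\tfrac12\Gamma_s:\widehat\sigma^2_s-H_s(Z_s,\Gamma_s,m_s)-c_s(\widehat b^{\P}_s,\widehat\sigma^2_s,m_s)$, which is nonpositive by the defining inequality of $H$ recorded in the Remark preceding the statement (equivalently \eqref{eqdef:Hamiltonian22}), with equality $\Ph$-a.s.\ because $\big(\widehat b^{\Ph}_s,\tfrac12\widehat\sigma^2_s\big)\in\partial_{(z,\gamma)}H_s(Z_s,\Gamma_s,m_s)$ along the solution $\Ph$ of \eqref{eq:MKV_SDE}. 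Hence $V_0(\xi,m)=\E^{\P}[Y_0]=J(\xi,m,\Ph)$, so $\Ph\in\mathrm{MFG}(\xi,\mu_0)$ with $\Ph\circ X_1^{-1}=\mu_1$, i.e.\ $\xi\in\mathrm{MFP}(\mu_0,\mu_1)$.

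\emph{Part (ii).} In setting \eqref{eq:def_PcU_1}, fix $\xi\in\mathrm{MFP}(\mu_0,\mu_1)$ with $\Ph\in\mathrm{MFG}(\xi,\mu_0)$, $\Ph\circ X_1^{-1}=\mu_1$, and write $\widehat m_s:=\Ph\circ X_s^{-1}$. I would introduce the dynamic value process $V_t:=\esup_{\P'\in\Pc^U(\mu_0)}\E^{\P'}\big[\xi-\int_t^1 c_s(\widehat b^{\P'}_s,\widehat\sigma^2_s,\widehat m_s)\,ds\,\big|\,\Fc_t\big]$. The integrability hypothesis \eqref{eq:kappa_integ} guarantees $V_0(\xi,\widehat m)\in\R$, $V_0\in\Lc^1_0(\mu_0)$, and the uniform integrability needed for the martingale estimates below to survive the unbounded drift. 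By the dynamic programming principle (see \cite{DPT2020}), the process $\{V_t-\int_0^t c_s^{\P'}\,ds\}_t$ is a $\P'$-supermartingale for every $\P'\in\Pc^U(\mu_0)$ and a $\Ph$-martingale for the optimizer.

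The crux is the second-order representation of $V$. Since \eqref{eq:def_PcU_1} presents $\Pc^U(\mu_0)$ as drift perturbations of martingale measures sharing the common, pathwise volatility $\widehat\sigma$, I would first run the first-order argument of Theorem~\ref{thm:linear_quad} inside each volatility regime: the $\P'$-Doob--Meyer decomposition together with the Brownian martingale representation produces a gradient which, by the aggregation of Nutz \cite{Nutz2012} (under ZFC and the continuum hypothesis, as in the Remark preceding the statement), may be taken to be a single universal $Z\in\Hc^2(\mu_0)$ with $V_t=V_0+\int_0^t Z_s\cdot dX_s-A^{\P'}_t$ modulo the running cost, each $A^{\P'}$ nondecreasing and $A^{\Ph}\equiv0$ by martingale optimality, the $\Hc^2$-integrability of $Z$ being supplied by \eqref{eq:kappa_integ}. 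The outer optimization over the volatility is where $\Gamma$ enters: the added hypothesis that $\gamma\mapsto\tfrac12 a:\gamma-H_s(\cdot,\gamma,\cdot)$ attains its maximum for every $a\in\pi(U)$ furnishes, at the optimizer's realized volatility $a=\widehat\sigma^2_s$, a process $\Gamma$ with $\tfrac12\widehat\sigma^2_s\in\partial_\gamma H_s(Z_s,\Gamma_s,\widehat m_s)$; combined with the drift optimality of $\widehat b^{\Ph}_s$ within that fixed-volatility subproblem, this gives $\big(\widehat b^{\Ph}_s,\tfrac12\widehat\sigma^2_s\big)\in\partial_{(z,\gamma)}H_s(Z_s,\Gamma_s,\widehat m_s)$. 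This simultaneously exhibits $\Ph$ as a weak solution of \eqref{eq:MKV_SDE}, so that $(\Ph,Z,\Gamma)\in\mathrm{MKV}(\mu_0,\mu_1)$, and, upon matching the $ds$-drifts under $\Ph$, yields the exact representation $\xi=V_0+Y_1^{Z,\Gamma,\Ph}$, i.e.\ $\xi\in\Xi^U(\mu_0,\mu_1)$.

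For \eqref{eq:equality_optimal_ctrl}, observe that $\xi-Y_1^{Z,\Gamma}=V_0\in\Lc^1_0(\mu_0)$ is $\Fc_0^+$-measurable; as all $\P\in\Pc^U(\mu_0)$ share the initial law $\mu_0$ and $\Fc_0^+$ reduces to $\sigma(X_0)$ modulo $\P$-null sets, the constant $\E^{\P}[V_0]=\int V_0\,d\mu_0$ does not depend on $\P$, so $J(\xi,\widehat m,\cdot)$ and $J(Y_1^{Z,\Gamma},\widehat m,\cdot)$ differ by a constant and share their maximizers. I expect the main obstacle to be exactly the representation step: aggregating a single $Z$ and the nondecreasing family $\{A^{\P'}\}$ across the non-dominated family of volatilities, and then reading off $\Gamma$, all under a quadratically growing (hence unbounded) drift. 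This is a genuine second-order backward SDE statement, and the role of \eqref{eq:kappa_integ} is precisely to deliver the uniform integrability that keeps the quasi-sure martingale representation and the Burkholder--Davis--Gundy estimates in force.
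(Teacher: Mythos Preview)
Your Part (i) is correct and matches the paper's verification argument. For Part (ii), your overall roadmap (dynamic value process, dynamic programming, then extracting $Z$ and $\Gamma$) is the right one, but the representation step is not obtained via ``Doob--Meyer plus Brownian representation plus Nutz aggregation.'' The paper uses the \emph{optional decomposition theorem} of El~Karoui--Quenez and F\"ollmer--Kramkov: in setting \eqref{eq:def_PcU_1} one has $\Qc(\mu_0)\subseteq\Pc^U(\mu_0)$ by condition \eqref{assume:kramkov}, so for each $\P$ the process $V^+_t-\int_0^t c_s^\P\,ds$ is a c\`adl\`ag supermartingale under \emph{every} martingale measure equivalent to $\P$, and optional decomposition produces a single integrand $Z^{\P}$ against $dX$ plus a nondecreasing $K^{\P}$. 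Aggregation is then elementary, via $Z_t:=d\langle V^+,X\rangle_t/dt$ pathwise; the structural assumption \eqref{assume:kramkov} is precisely what makes this tool available. Brownian predictable representation under a single $\Q$ is not guaranteed here, so your proposed mechanism would not close the argument.

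The genuine gap is your justification of \eqref{eq:equality_optimal_ctrl}. The decomposition $\xi=V_0^++Y_1^{Z,\Gamma}$ is only established $\Ph$-a.s., not under every $\P\in\Pc^U(\mu_0)$; for a general $\P$ one has instead $\xi=V_0^++\int_0^1 Z_s\cdot dX_s-\int_0^1 F_s(Z_s,\widehat\sigma^2_s,\widehat m_s)\,ds-K_1$, $\P$-a.s., with the aggregated nondecreasing process $K$ not identically zero. Hence $\xi-Y_1^{Z,\Gamma}$ is not an $\Fc_0^+$-measurable constant across $\Pc^U(\mu_0)$, and the ``differ by a constant'' argument fails. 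The paper fixes this by a second choice of process $\Gamma'$ giving the \emph{strict} inequality $H_s(Z_s,\Gamma'_s,\widehat m_s)>\tfrac12\widehat\sigma^2_s\!:\!\Gamma'_s+F_s(Z_s,\widehat\sigma^2_s,\widehat m_s)$, and then sets $\overline\Gamma_s:=\Gamma_s\1_{\{K_s=0\}}+\Gamma'_s\1_{\{K_s>0\}}$. Since $\P$ is optimal for $J(\xi,\widehat m,\cdot)$ iff $K\equiv0$ $\P$-a.s., and since on $\{K_s>0\}$ the realized $\widehat\sigma^2_s$ is by construction \emph{strictly} suboptimal for the Hamiltonian at $(Z_s,\overline\Gamma_s)$, optimality for $J(Y_1^{Z,\overline\Gamma},\widehat m,\cdot)$ forces $\P$ onto $\{K\equiv 0\}$ as well, and conversely. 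This $\overline\Gamma$ construction is the missing idea.
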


	\begin{proof}
	(i) In order to prove the inclusion, we only need to verify that, 
	for all $(Z,\Gamma,\Ph) \in \mathrm{MKV}(\mu_0, \mu_1)$ such that $Y_1^{Z,\Gamma,\Ph}\in \Xi(\mu_0,\mu_1)$,
	one has $\Ph\in\mathrm{MFG}\big(Y_1^{Z,\Gamma,\Ph},\mu_0\big)$, i.e.,
	\begin{equation}\label{proof(i)}
		J\left(Y_1^{Z,\Gamma, \Ph},\widehat{m},\Ph\right) = V_0\left(Y_1^{Z,\Gamma,\Ph}, \widehat{m}\right),
		\quad \mbox{where}~\widehat{m}_s:=\Ph\circ X_s^{-1}. 
	\end{equation}
	Indeed, as $J\big(Y_0+Y_1^{Z,\Gamma, \Ph},\widehat{m},\P\big)=\E^{\P_0}\big[Y_0\big]+J\big(Y_1^{Z,\Gamma, \Ph},\widehat{m},\P\big)$, for all $\P\in\Pc^U(\mu_0)$, this implies that $Y_0+Y_1^{Z,\Gamma,\Ph}\in \mathrm{MFP}(\mu_0, \mu_1)$. To prove \eqref{proof(i)}, we first compute for  $\P\in\Pc^U(\mu_0)$ that
	      \begin{align*}
	         J\left(Y_1^{Z,\Gamma,\Ph},\widehat{m}, \P\right) 
	         &= \E^{\P}\left[\int_0^1Z_s\!\cdot\! dX_s 
	                          + \int_0^1\left(\frac12 \Gamma_s\!:\!\widehat\sigma_s^2 
	                                                  - H_s\big(Z_s, \Gamma_s, \widehat{m}_s\big)
	                                                  - c_s\big(\widehat b^{\P}_s, \widehat \sigma_s^2, \widehat{m}_s\big)\right) ds\right] 
	         \\
	         &= \E^{\P}\left[\int_0^1\left(Z_s\cdot \widehat b^{\P}_s + \frac12 \Gamma_s:\widehat\sigma_s^2 -c_s\big(\widehat b^{\P}_s,\widehat\sigma_s,\widehat{m}_s\big) - H_s\big(Z_s, \Gamma_s, \widehat{m}_s\big)\right)ds\right],
	      \end{align*} 
		 as $Z\in\Hc^2(\mu_0)$. 
	Then, it follows from the definition of the Hamiltonian that 
		$ J\big(Y_1^{Z,\Gamma,\Ph}, \widehat{m}, \P\big) \leq 0$ for all $\P\in\Pc^U(\mu_0)$. On the other hand, as $(Z,\Gamma,\Ph)\in \mathrm{MKV}(\mu_0, \mu_1)$, it follows from \eqref{eq:MKV_SDE}
		that $\big(\widehat b^{\Ph}_s,\frac12\widehat{\sigma}^2_s\big)\in\partial_{z,\gamma}H_s(Z_s,\Gamma_s,\widehat{m}_s)$, and therefore the supremum in the Hamiltonian is attained: 
	        $$ Z_s\cdot \widehat b^{\Ph}_s + \frac12 \Gamma_s:\widehat\sigma^2_s -c_s\big(\widehat b^{\Ph}_s,\widehat\sigma^2_s,\widehat{m}_s\big) = H_s\big(Z_s, \Gamma_s, \widehat{m}_s\big), \quad \mathrm{Leb}\otimes\Ph\mbox{-}a.e. $$
	Hence $ J\big(Y_1^{Z,\Gamma,\Ph}, \widehat{m}, \Ph\big) = 0=V_0\big(Y_1^{Z,\Gamma,\Ph}, \widehat{m}\big)$, which concludes the proof of \eqref{proof(i)}.

	\vspace{3mm}

	\noindent (ii)
	Next, let us consider $\xi\in\mathrm{MFP}(\mu_0,\mu_1)$ in the setting \eqref{eq:def_PcU_1} for the definition of $\Pc^U(\mu_0)$, and under the additional conditions in Item (ii) of the statement.
	We observe that by condition \eqref{assume:kramkov}, one has $\Qc(\mu_0) \subseteq \Pc^U(\mu_0)$.
	 
	\vspace{0.5em}

	Let $\Ph\in\mathrm{MFG}(\xi,\mu_0)$ satisfying $\Ph\circ X_1^{-1}=\mu_1$, then $\Ph$ is an optimal control of the stochastic control problem $V_0(\xi,\widehat m)$ defined in \eqref{V0:controlvol} with $\widehat{m}_s:=\Ph\circ X_s^{-1}$, $s\in[0,1]$.

	\vspace{3mm}	

	\noindent (ii-1)
	Let $\Pc^U(t,\omega)$ denote the dynamic version of $\Pc^U(\mu_0)$ by considering the dynamics on $[t,1]$ starting at time $t \in [0,T]$ from the path $\omega\in\Omega$, and consider the dynamic version of the control problem $V_0$: 
	\begin{align*}
		V_t(\omega) 
		 := 
		\sup_{\P\in\Pc^U(t,\omega)}\E^{\P}\left[\xi - \int_t^1 c_s\big( \widehat{\sigma}_s \beta^{\P}_s, \widehat{\sigma}^2_s,\widehat{m}_s\big)ds\right], 
		\quad \mbox{for all}~(t,\omega)\in[0,1]\times\Omega.
	\end{align*}
	Since $\xi$ and $c$ satisfy the addition integrability condition in \eqref{eq:kappa_integ}, 
	it follows that (see e.g.~Soner, Touzi, and Zhang \cite{STZ2012,STZ2013} and Possama\"{i}, Tan, and Zhou \cite{PTZ2018})
	the process $\{V_t,t\in[0,1]\}$ satisfies the dynamic programming principle:
	\begin{equation}\label{DPP}
		V_t
		=
		\sup_{\P\in\Pc^U(t,\omega)}
		\E^{\P}
		\left[ V_{t+h} - \int_t^{t+h} c_s\big( \widehat{\sigma}_s \beta^{\P}_s, \widehat{\sigma}^2_s,\widehat{m}\big)ds \right],
		\quad \P\mbox{-a.s.}, 
	\end{equation}
	for all $\P\in\Pc^U(\mu_0)$ and $0 \le h \le T-t$, 
	so that $V+\int_0^.c_s\big( \widehat{\sigma}_s \beta^{\P}_s, \widehat{\sigma}^2_s,\widehat{m}\big)ds$ is $\P$-supermartingale for all $\P \in \Pc^U(\mu_0)$, and we may introduce the corresponding right-continuous limit $V^+_t (\om) := \lim_{s \searrow t} V_s(\om)$, which inherits the dynamic programming principle \eqref{DPP}. 
	Moreover, one has
	\begin{equation}\label{integV}
		\sup_{\P \in \Pc^U(\mu_0)} 
		\E^{\P} \left[ \sup_{0 \le s \le 1} \big| V^+_s \big|^{2+ \kappa'} \right] < \infty,
		\quad \mbox{for some}~0 < \kappa' <  \kappa.
	\end{equation} 

	\noindent (ii-2) As $\Qc(\mu_0) \subseteq \Pc^U(\mu_0)$, for any $\P \in \Pc^U(\mu_0)$, the process $V^+_t - \int_0^t c_s ( \widehat{\sigma}_s \beta^{\P}_s, \widehat{\sigma}^2_s,\widehat{m})ds$ is a c\`adl\`ag supermartingale under any martingale measure equivalent to $\P$. 
	By the optional decomposition theorem (see El Karoui and Quenez \cite[Theorem 2.4.2]{EKQ1995}, or F\"ollmer and Kramkov \cite{FK1997}), 
	there is a non-decreasing process $K^{\P}$ starting from $K^{\P}_0 = 0$, and a process $Z^{\P}$ such that for $t \in [0,T]$
		$$
			V_t^+ 
			=
			 \xi + \int_t^1 \Big( \widehat \sigma_s \beta^{\P}_s \cdot Z^{\P}_s - c_s\big(  \widehat{\sigma}_s \beta^{\P}_s, \widehat \sigma^2_s,\widehat{m}_s\big) \Big) ds - \int_t^1Z^{\P}_s\cdot dX_s - K^{\P}_t + K^{\P}_T,
			\quad \P\mbox{-a.s.}
		$$
	Let $Z_t :=\frac{d\langle V^+, X \rangle_t}{dt}$, then $Z^{\P} = Z$, $\P$-a.s., for all $\P \in \Pc^U(\mu_0)$,
	and then one can find a process $K$ such that $K^{\P} = K$, $\P$-a.s., for all $\P \in \Pc^U(\mu_0)$.
	Moreover, by standard estimates, it follows from \eqref{integV} that $Z \in \Hc^2(\mu_0)$.
	
	\vspace{0.5em}
	
	Further, since the optimal control $\Ph$ is a maximizer in the dynamic programming principle, we have $K = 0$, $\Ph$-a.s. 
	Then, by the same arguments as in Theorem \ref{thm:linear_quad}, we that
	$$
		V^+_t 
		=
		\xi + \int_t^1 F_s\big(Z_s,\widehat \sigma^2_s, \widehat{m}_s\big)ds - \int_t^1Z_s\cdot dX_s,
		~~t \in [0,T],
		\quad \Ph\mbox{-a.s.},
	$$
	where $F_s(\om, z, a, m) := \sup_{b \in U_a} \big\{ b \cdot z - c_s(\om, b, a, m) \big\}$ with $U_a := \{ b \in \R^d : (b,a) \in U \}$, and
	$$
		F_s\big(Z_s,\widehat \sigma^2_s, \widehat{m}_s\big)
		=
		\widehat \sigma_s \beta^{\Ph}_s \cdot Z_s - c_s\big(\widehat \sigma_s \beta^{\Ph}_s, \widehat \sigma^2_s,\widehat{m}_s\big),
		\quad \mathrm{Leb}\otimes\Ph\mbox{-}a.e.
	$$
	\noindent (ii-3) 
	Notice that
	   \begin{align*}
	   	 & H_s(\om, z, \gamma, m) 
	   	   =
	   	   \sup_{a \in \pi(U)} \bigg\{ \frac12 a: \gamma + F_s(\om, z, a, m) \bigg\}, \\
	   	 & ~~\mbox{with}~~
	   	 \pi(U):= \big\{ a : (b,a)\in U ~\mbox{for some}~b \in \R^d \big\}.
	   \end{align*}
	Then, under the additional conditions in Item (ii) of the present Theorem, 
	we can find a (measurable) process $\Gamma$ in the sub-gradient of $\frac12 a \mapsto c(\cdot, a)$, such that
	$(Z,\Gamma) \in D_H(\cdot, \widehat m_{\cdot})$ and
	$$
		H_s(\cdot,Z_s,\Gamma_s, \widehat m_s)
		 =
		\frac12 \widehat{\sigma}^2_s :\Gamma_s + F_s\big(\cdot,Z_s,\widehat{\sigma}^2_s,\widehat m_s \big),
		\quad \om\mbox{-wisely},
	$$
	and a  (measurable)  process $\Gamma'$ such that one has strict inequality ``$>$'' if $\Gamma$ is replaced by $\Gamma'$ in the above formula.
	Therefore, for the optimal control $\Ph$, one has
	$$
		H_s(\cdot,Z_s,\Gamma_s, \widehat m_s)
		=
		\widehat \sigma_s \beta^{\Ph}_s \cdot Z_s 
		+
		\frac12 \widehat \sigma^2_s :\Gamma_s
		-
		c_s\big(\widehat \sigma_s \beta^{\Ph}_s, \widehat \sigma^2_s,\widehat{m}_s\big),
		\quad \mathrm{Leb}\otimes\Ph\mbox{-}a.e.
	$$
	By standard convex duality, one has 
	$$ 
	   \left(\widehat \sigma_s \beta^{\Ph}_s, \frac12\widehat\sigma_s^2\right) \in \partial_{(z,\gamma)} H_s(\cdot,Z_t,\Gamma_t) \subseteq U, \quad \mathrm{Leb}\otimes\Ph\mbox{-}a.e. 
	$$  
 	We then define $\overline{\Gamma}_s:=\Gamma_s\1_{\{K_s=0\}} + \Gamma'_s \1_{\{K_s > 0\}}$. 
 	Consequently, $\Gamma=\overline{\Gamma}$, $\mathrm{Leb}\otimes\Ph$-a.e., and $(Z, \overline{\Gamma},\Ph) \in \mbox{MKV}(\mu_0, \mu_1)$, and the random variable $\xi$ has the required decomposition  $\xi=V_0^+ + Y_1^{Z, \overline{\Gamma}, \widehat\P}$, $\Ph$-a.s.

	\vspace{3mm}

	\noindent (ii-4) Finally, the equality in \eqref{eq:equality_optimal_ctrl} follows immediately from the last construction of $\overline{\Gamma}$ as a probability measure $\P$ is optimal if and only if the corresponding process $K^\P = 0$, $\P$-a.s.
\qed	
    \end{proof}

\subsection{Existence of solutions to the planning problem}

	The first inclusion in Theorem \ref{thm:2MFP_main1} provides a systematic way to construct solutions to the mean field planning problem with given marginals $\mu_0$ and $\mu_1$,
	that is, it is enough to construct a McKean-Vlasov dynamics $X$ in \eqref{eq:MKV_SDE}  with given marginal distributions.
	Under further conditions, our next result ensures that it is enough to construct a semi-martingale measure in $\Pc^U(\mu_0, \mu_1)$.

%
	
	\begin{Proposition} \label{prop:dH_full_image}
		$\mathrm{(i)}$ In both settings \eqref{eq:def_PcU_0} and \eqref{eq:def_PcU_1} for the definition of $\Pc^U(\mu_0)$, assume that, for all $(b, \frac12 a) \in U$ and $(s,\om, m) \in [0,1] \x \Om \x \Pc(\R^d)$,
		there exits a maximizer for $\sup_{(z, \gamma) \in D_H(\cdot)} \big\{ b \cdot z + \frac12 a : \gamma - H(\cdot, z, \gamma)\big\}$.
		Then, for all $\Ph \in \Pc^U(\mu_0, \mu_1)$, there exists a measurable selection
		$(\bar b,  \frac12 \bar \sigma^2)(\cdot)$ in $\partial_{(z,\gamma)} H_s(\cdot) \subseteq U$,
		together with $\F$-progressively measurable processes $(Z, \Gamma) \in D_m(\cdot)$ such that
		$$
			X_t = X_0 + \int_0^t \overline b_s \big(Z_s, \Gamma_s, \Ph \circ X_s^{-1} \big) ds 
			+ \int_0^t \overline\sigma_s \big(Z_s, \Gamma_s, \Ph \circ X_s^{-1}  \big) dW^{\Ph}_s,
			~~\Ph\mbox{-a.s.},
		$$
		where $W^{\Ph}$ is $\Ph$-Brownian motion, i.e.~$(Z, \Gamma,\Ph)\in \mathrm{MKV}_0(\mu_0,\mu_1)$.
		
		\vspace{0.5em}
		
		\noindent $\mathrm{(ii)}$ Consequently, if $Z\in\Hc^2(\mu_0)$, then we obtain that $(Z, \Gamma,\Ph)\in \mathrm{MKV}(\mu_0,\mu_1)$ and hence $Y^{Z,\Gamma,\widehat{\P}}_1\in\mathrm{MFP}(\mu_0,\mu_1)$.
	\end{Proposition}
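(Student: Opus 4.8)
The plan is to invert the Legendre transform that defines the Hamiltonian. By definition of $\Pc^U(\mu_0,\mu_1)$, the realized drift-diffusion pair of the canonical process under $\Ph$, namely $(\widehat{b}^{\Ph}_s,\frac12\widehat\sigma^2_s)$, already lies in $U$ for $\mathrm{Leb}\otimes\Ph$-a.e. $(s,\om)$; the task is to recover dual variables $(Z_s,\Gamma_s)$ realizing this pair as an element of $\partial_{(z,\gamma)}H$. Since $H_s(\om,\cdot,\cdot,m)$ is by \eqref{eqdef:Hamiltonian22} the convex conjugate of $(b,\frac12 a)\mapsto c_s(\om,b,a,m)$ over $U$, standard convex duality gives the equivalence
$$
(b,\tfrac12 a)\in\partial_{(z,\gamma)}H_s(\om,z^*,\gamma^*,m)
\quad\Longleftrightarrow\quad
(z^*,\gamma^*)\ \text{maximizes}\ \big\{b\cdot z+\tfrac12 a:\gamma-H_s(\om,z,\gamma,m)\big\}.
$$
Hence the hypothesis of (i) — existence of a maximizer for $\sup_{(z,\gamma)\in D_H}\{b\cdot z+\frac12 a:\gamma-H_s(\om,z,\gamma,m)\}$ at every $(b,\frac12 a)\in U$ — is exactly the assertion that for each $(s,\om)$ the inverse-subgradient inclusion at $(b,\frac12 a)=(\widehat{b}^{\Ph}_s,\frac12\widehat\sigma^2_s)$ is solvable. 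This is the pointwise content of the statement; the rest is measurable selection and bookkeeping.

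Concretely, first I would fix $\Ph\in\Pc^U(\mu_0,\mu_1)$, set $\widehat m_s:=\Ph\circ X_s^{-1}$, and consider the correspondence
$$
\Sigma_s(\om):=\Big\{(z,\gamma)\in D_H(s,\om,\widehat m_s):(\widehat{b}^{\Ph}_s,\tfrac12\widehat\sigma^2_s)\in\partial_{(z,\gamma)}H_s(\om,z,\gamma,\widehat m_s)\Big\},
$$
which by the displayed equivalence coincides with $\argmax_{(z,\gamma)}\{\widehat{b}^{\Ph}_s\cdot z+\frac12\widehat\sigma^2_s:\gamma-H_s\}$ and is nonempty $\mathrm{Leb}\otimes\Ph$-a.e. by the hypothesis. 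I would then invoke a measurable selection theorem (Jankov--von Neumann, or Kuratowski--Ryll-Nardzewski for closed-valued correspondences) to extract $\F$-progressively measurable processes $(Z,\Gamma)$ with $(Z_s,\Gamma_s)\in\Sigma_s$ for $\mathrm{Leb}\otimes\Ph$-a.e. $(s,\om)$. For this I must check that the graph of $(s,\om)\mapsto\Sigma_s(\om)$ is (universally) measurable with respect to the progressive $\sigma$-field, which reduces to the joint measurability of $H$ and of its subgradient in $(s,\om,z,\gamma)$ together with the progressive measurability of $s\mapsto(\widehat{b}^{\Ph}_s,\widehat\sigma^2_s,\widehat m_s)$ (the latter via Karandikar's pathwise construction of $\widehat\sigma^2$).

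Given such $(Z,\Gamma)$, I would pick the measurable selection $(\overline b,\frac12\overline\sigma^2)\in\partial_{(z,\gamma)}H$ so that $(\overline b_s,\frac12\overline\sigma^2_s)(Z_s,\Gamma_s,\widehat m_s)=(\widehat{b}^{\Ph}_s,\frac12\widehat\sigma^2_s)$ $\mathrm{Leb}\otimes\Ph$-a.e., which is possible precisely because the subgradient inclusion holds on $\Sigma_s$. Since under $\Ph$ the canonical process has drift $\widehat{b}^{\Ph}$ and diffusion $\widehat\sigma$, the McKean--Vlasov SDE \eqref{eq:MKV_SDE} is then satisfied by $\Ph$ by construction; together with $(Z,\Gamma)\in D_H(\cdot,\widehat m_{\cdot})$ and $\Ph\in\Pc^U(\mu_0,\mu_1)$ this yields $(\Ph,Z,\Gamma)\in\mathrm{MKV}_0(\mu_0,\mu_1)$, proving (i). The argument is uniform across the two settings: in \eqref{eq:def_PcU_1} one only substitutes $\widehat{b}^{\Ph}=\widehat\sigma\beta^{\Ph}$, which still lies in $U$ together with $\frac12\widehat\sigma^2$. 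For (ii), if in addition $Z\in\Hc^2(\mu_0)$, then $(\Ph,Z,\Gamma)\in\mathrm{MKV}(\mu_0,\mu_1)$ by definition, whence $Y^{Z,\Gamma}_1\in\Xi^U(\mu_0,\mu_1)\subseteq\mathrm{MFP}(\mu_0,\mu_1)$ by part (i) of Theorem \ref{thm:2MFP_main1}.

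I expect the measurable selection to be the main obstacle. The convex-duality identity and the final admissibility checks are routine, but the subgradient correspondence $\Sigma_s$ is only closed-valued, and establishing that it has measurable graph with respect to the progressive $\sigma$-field — so that a progressively measurable selection actually exists on the canonical filtered space — requires the careful measurability verification flagged above rather than a one-line appeal.
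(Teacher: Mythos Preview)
Your proposal is correct and follows essentially the same route as the paper: the convex-duality equivalence $(b,\tfrac12 a)\in\partial_{(z,\gamma)}H \Leftrightarrow (z,\gamma)\in\argmax\{b\cdot z+\tfrac12 a:\gamma-H\}$, followed by a measurable selection of $(Z,\Gamma)$ from the argmax correspondence evaluated at the realized characteristics $(\widehat b^{\Ph},\tfrac12\widehat\sigma^2)$, and then the appeal to Theorem~\ref{thm:2MFP_main1}\,(i) for part~(ii). The paper's proof is in fact terser than yours on the measurable-selection step, simply asserting that ``one can use the measurable selection theorem''; your more careful identification of the progressive-measurability verification as the only nontrivial point is accurate and, if anything, an improvement in exposition.
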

	\begin{proof}
	We notice that the Hamiltonian $H(\cdot, z, \gamma)$ is convex in $(z, \gamma)$,
	then by standard result in convex analysis, see, e.g., \cite[Theorem 23.5]{Rockafellar1970}, $(\widehat z, \widehat \gamma)$ is maximizer of  
	$$ \sup_{(z, \gamma) \in D_H(\cdot)} \bigg\{ b \cdot z + \frac12 a : \gamma - H(\cdot, z, \gamma)\bigg\} =: c^{**}(\cdot, b, a) $$ 
	if and only if 
	 \begin{align*}
	 \Big(\widehat z, \frac12\widehat\gamma\Big) \in \partial_{(b,a)} c^{**}(\cdot,b,a),
	 \end{align*}
	which is is equivalent to 
	\begin{align*}
		\Big(b, \frac12 a\Big) \in \partial_{(z,\gamma)} H\big(\cdot, \widehat z, \widehat \gamma\big).
	\end{align*}
	Let $\Ph \in \Pc^U(\mu_0, \mu_1)$ so that the dynamic of the canonical process $X$ is given by
	$$
		dX_t = b^{\Ph}_t dt + \widehat \sigma_t dW^{\Ph}_t,~~ \Ph\mbox{-a.s.}
		~~~\mbox{and}~~~
		\big(b^{\Ph}, \widehat \sigma\big) \in U,~~ \mathrm{Leb}\otimes\Ph\mbox{-}a.e.
	$$
	Let $\widehat m_s := \Ph \circ X^{-1}_s$, $s \in [0,1]$. 
	One can use the measurable selection theorem to choose a version of sub-gradient $\big(\overline b,  \frac12\overline \sigma^2\big)(\cdot)$ in $\partial_{(z,\gamma)} H(\cdot)$, together with $\F$-progressively measurable processes $(Z, \Gamma)$ such that
	$$
		(b^{\widehat\P}_t, \widehat \sigma^2_t) = ( \overline b, \overline \sigma^2)(Z_t, \Gamma_t, \widehat m_t), ~~\mathrm{Leb}\otimes\Ph\mbox{-}a.e.,
	$$
	and hence $(Z, \Gamma,\Ph)\in \mathrm{MKV}_0(\mu_0,\mu_1)$.
	\qed
	\end{proof}

\paragraph{Construction of martingales with given marginals}

	Let $U = \{0\} \x \S^d_+$, and 
	$$
		c_s(\om, b, a, m) 
		:=
		\frac14 C_m |a|^2,
		~~~\mbox{with}~~
		C_m := \int_{\R^d} (1+|x|^2) m(dx).
	$$
	Then, the Hamiltonian has domain $D_H(\cdot) = \R^d \x \S^d$, and
	$$
		H_s(\om, z, \gamma, m)
		 =
		\frac14 C_m^{-1} |\gamma|^2,
		~~~
		\partial_{(z,\gamma)}H(\cdot, z, \gamma,m) = \Big\{ \Big( 0,  \frac12 C_m^{-1} \gamma \Big) \Big\}.
	$$
	Clearly, all the conditions in Proposition \ref{prop:dH_full_image} hold true.
	Therefore, to find a solution to the mean field planning problem $\mathrm{MFP}(\mu_0, \mu_1)$,
	a first approach consists in finding $\Ph \in \Pc^U(\mu_0, \mu_1)$ so that
	$$
		X_t = X_0 + \int_0^t \widehat \sigma_s dW^{\Ph}_s, \quad \Ph\mbox{-a.s.},
	$$
	and hence with $Z \equiv 0$, $\Gamma_t := C_{\widehat m_t} \widehat \sigma^2_t$, $\widehat m_t := \Ph \circ X_t^{-1}$,
	one has $(\Ph, Z, \Gamma) \in \mathrm{MKV}(\mu_0, \mu_1)$.
	
	\vspace{0.5em}
	
	In this setting and when $d= 1$, 
	the problem of finding an element $\Pc^U(\mu_0, \mu_1)$ is equivalent to the so-called Skorokhod embedding problem, which consists in finding a stopping time time $\tau$ in some filtered probability space equipped with a Brownian motion $W$ such that
	$$
		W_0 \sim \mu_0, \quad W_{\tau} \sim \mu_1.
	$$
	Indeed, given $\Ph \in \Pc^U(\mu_0, \mu_1)$, under which $X$ is a diffusion martingale with marginals $\mu_0$ and $\mu_1$.
	By Dambis-Dubins-Schwarz theorem, one can represent $X$ as a time-changed Brownian motion, i.e.,
	$$
		X_t = W_{\langle X \rangle_t}, \quad t \in [0,1],
	$$
	where $W$ is some Brownian motion and $\langle X \rangle_t$ are stopping times w.r.t.~the time-changed filtration.
	Thus $(W, \langle X\rangle_1)$ provides a solution to the Skorokhod embedding problem with marginals $(\mu_0, \mu_1)$.
	Conversely, given a solution $(W, \tau)$, to the Skorokhod embedding problem with marginals $\mu_0$ and $\mu_1$,
	let us define
	$$
		X_t := W_{\tau \wedge \frac{t}{1-t}}, \quad t \in [0,1].
	$$
	Then, it is easy to check that $X$ is a martingale diffusion process such that $X_0 = W_0 \sim \mu_0$ and $X_1 = W_{\tau} \sim \mu_1$, and hence $\P \circ X^{-1} \in \Pc^U(\mu_0)$.
	
	\vspace{0.5em}
	
	We also notice that the Skorokhod embedding problem has a solution if and only if $\mu_0$ and $\mu_1$ have finite first order moment and $\int_{\R} \phi(x) \mu_0(dx) \le \int_{\R} \phi(x) \mu_1(dx)$ for all convex function $\phi$.
	Moreover, there are various constructions of solutions to the Skrokohod embedding problem, and many of them enjoy some optimal property, see e.g.~Ob\l{}\'oj \cite{Obloj2004} for a survey.
	Consequently, the induced solution of the mean field planning problem enjoys the same optimal property among all possible solutions, and hence solves the corresponding optimal planning problem as discussed in Section \ref{subsec:entropicMFG}.

\paragraph{Construction of semi-martingales with given marginals}
	
	When $U = \R^d \x \S^d$, the problem of construction a semi-martingale measure in $\Pc^U(\mu_0, \mu_1)$ is very easy. Let us report the construction by using the so called Bass solution of the Skorohod embedding problem, see e.g.~Ob\l{}\'oj \cite{Obloj2004} .
	Assume that both $\mu_0$ and $\mu_1$ have finite first order moment, 
	and let $B$ be a standard Brownian motion in a probability space $(\Om^*, \Fc^*, \P^*)$,
	together with  a random variable $\Xh_0$ independent of $B$ such that
	$\P^* \circ \Xh^{-1}_0 = \mu_0$.
	We can then find some measurable function $T: \R^d \to \R^d$ such that $\P^* \circ \big( T(B_1) \big)^{-1} \sim \mu_1$.
	Next, by martingale representation, there exists a constant $c \in \R$ and a predictable process $\sigmat$ (w.r.t.~the Brownian filtration generated by $B$) such that 
	$$
		T(B_1) = c + \int_0^1 \sigmat_t dB_t.
	$$
	We then define a process $\Xh$ by
	$$
		\Xh_t := \Xh_0 + \int_0^t \big(c - \Xh_0 \big) ds + \int_0^t \sigmat_s dB_s.
	$$
	It is immediate to check that
	$$
		\Ph ~:=~ \P^* \circ \Xh^{-1} ~\in~ \Pc^U(\mu_0, \mu_1).
	$$
	
\paragraph{Optimal transport along controlled McKean-Vlasov dynamic}

	As in the discussion in Section \ref{subsec:entropicMFG},
	one can consider an optimal mean field planning problem, by choosing an optimal solution $\xi$ in the class $\Xi(\mu_0, \mu_1)$ w.r.t.~some criteria.
	The problem can be reduced to an optimal transport problem along controlled McKean-Vlasov dynamic:
	for some reward function $\Psi$, one solves 
	$$
		\sup_{(Z,\Gamma, \Ph) \in \mathrm{MKV}(\mu_0, \mu_1) }
		\Psi\big(Z, \Gamma, \Ph\big),
	$$
	where we recall from \eqref{eq:MKV_SDE} that $\mathrm{MKV}(\mu_0, \mu_1)$ is the set of all $(Z, \Gamma, \Ph)$
	such that,
	with a version of sub-gradient  $\big(\overline b_s, \frac12 \overline \sigma_s^2\big)(z, \gamma, m) \in\partial_{(z,\gamma)} H_s( z, \gamma, m) \subseteq U$,
	$\Ph$ is weak solution to the McKean-Vlasov equation:
	$$
		X_t = X_0 + \int_0^t \overline b_s \big(Z_s, \Gamma_s, \Ph \circ X_s^{-1} \big) ds 
		                     + \int_0^t \overline\sigma_s \big(Z_s, \Gamma_s, \Ph \circ X_s^{-1}  \big) dW^{\Ph}_s,
		~~~\Ph\mbox{-a.s.},
	$$
	under the marginal constraints:
	$$
		\Ph \circ X_0^{-1} = \mu_0 ~~~\mbox{and}~~~\Ph \circ X_1^{-1} = \mu_1.
	$$
	Such a problem extends the classical optimal transport problem studied in the literature,
	such as the martingale optimal transport in Beiglb\"ock, Henry-Labord\`ere, and Penkner \cite{BHLP2013} and Galichon,
	Henry-Labord\`ere, and Touzi \cite{GHLT2014}, 
	the martingale Benamou-Brenier problem in Huesmann and
	Trevisan \cite{HT2019}, Backhoff-Veraguas, Beiglb\"ock, Huesmann, and K\"allblad \cite{BBHK2020}, or the semimartingale optimal transport problem in Mikami and Thieullen \cite{MT2008}, Tan and Touzi \cite{TT2013}, etc.

\bibliography{MFGplanning}
 \bibliographystyle{abbrv}

\end{document}